\theoremstyle{plain}
\newtheorem{lem}{\textbf{Lemma}}[section]
\newtheorem{theorem}[lem]{\textbf{Theorem}}
\newtheorem{definition}[lem]{\textbf{Definition}}
\newtheorem{example}[lem]{\textbf{Example}}
\newtheorem{proposition}[lem]{\textbf{Proposition}}
\newtheorem{remark}{\textbf{Remark}}
\theoremstyle{definition}
\theoremstyle{remark}
\newtheorem*{conclusion}{\textbf{Concluding Remarks}}
\begin{document}

\thispagestyle{plain}
{\noindent Journal of Mathematics \\
Vol. XX, No. XX, (2022), pp-pp (Will be inserted by layout editor)}\\
ISSN: 2314-4629 (Print)
ISSN: 2314-4785 (Online)\\
URL: http://www.hindawi.com/journals/jmath\\

\noindent {}   \\[0.50in]
\noindent {}   \\[0.50in]


\title[ ]{ON RADICAL OF INTUITIONISTIC FUZZY PRIMARY SUBMODULE}

\author[  Taherpour, Ghalandarzadeh, Malakooti Rad, Safari]{Abbas Taherpour$^{1}$, Shaban Ghalandarzadeh$^{2,*}$,  Parastoo Malakooti Rad$^{3}$,  Parvin Safari$^{4}$}
\thanks{$^{*}$ Corresponding author\\ 2020 Mathematics Subject Classification. 08A72; 03F55.}

\address{$^{1,3,4}$Department of Mathematics, Qazvin Branch, Islamic Azad University, Qazvin, Iran}
\address{$^{2}$Faculty of Mathematics, K.N.Toosi University of Technology, Tehran, Iran}

\email{ataherpour437@gmail.com(A. Taherpour); ghalandarzadeh@kntu.ac.ir(S. Ghalandarzadeh);
pmalakoti@gmail.com(P. Malakooti Rad); parvin.safari@gmail.com(P.Safari)}



\keywords{}
\maketitle
\hrule width \hsize \kern 1mm

\begin{abstract}
In this paper, we further study the theory of Intuitionistic fuzzy submodules and we will define intuitionistic fuzzy primary submodule with the help of the definition of a radical submodule, and we also study the properties of these submodules. Furthermore homomorphic image and pre-image of intuitionistic fuzzy primary submodule are investigated. 

\textbf{Keywords}: intuitionistic fuzzy primary submodule, Intuitionistic fuzzy weakly primary ideal, Intuitionistic fuzzy ideal.
\end{abstract}
\maketitle
\vspace{0.1in}
\hrule width \hsize \kern 1mm

\section{\textbf{Introduction}}
Throughout this paper, $ R $ is a commutative ring with unity and $ M $ is an unitary R-module. A proper submodule $ P $ of $ M $ is called prime submodule provided that for any $ r\in R $ and $ m\in M $, $ rm\in P $ implies that $ m\in P $ or $ r\in (P:M)=\lbrace r\in R \vert rM\subseteq P\rbrace $. The set of all prime submodules of $ M $ is called prime spectrum of $ M $ and is denoted by $ Spec(M) $. The Weakly prime submodules have been introduced by Behboodi et al, in \cite{11} . Also, the spectrum of weakly prime submodule has been investigated in \cite{7} . Following this, the concepts of primal submodules over the noncommutative ring and nearly prime submodules have been presented in \cite{1,13}.
 
One of the structures that play an important role in mathematics is the structure of fuzzy algebra, which has been used extensively in many fields such as computer science, information technology, theoretical physics, control engineering, etc. Since the introduction of fuzzy sets in 1965 \cite{10}, researchers have done extensive research on various concepts of abstract algebra in the space of fuzzy sets, such as Rosenfeld\cite{2}, who in 1971 was the first to define the concept of fuzzy subgroups of a group. Many extensions of this concept have been proposed since then (especially in recent decades). In 1975, Relescu, Naegoita\cite{4} applied the concept of fuzzy sets to the theory of modules. After presenting this definition, different types of fuzzy submodules were examined in the last two decades. In 1986, Atanassov\cite{8} introduced intuitionistic fuzzy sets based on the degree of membership and degree of non-membership, provided that the total membership and non-membership should not exceed one another. In 1989, Biswas\cite{16} applied the concept of intuitionistic fuzzy sets to group theory and studied the intuitionistic fuzzy subsets of a group. In the last few years a considerable number of papers have been done on fuzzy and intuitionistic fuzzy submodules in general, and fuzzy and intuitionistic fuzzy prime and primary submodule in particular.
Hur et al.\cite{6} introduced the notion of intuitionistic fuzzy prime ideals and intuitionistic fuzzy weakly completely prime ideals in a ring. The concept of the fuzzy prime submodule and fuzzy primary submodule was studied by Mashinchi and Zahedi in \cite{12}. Also intuitionistic fuzzy submodules and their properties were studied by many mathematicians(\cite{3},\cite{18},\cite{14}). In this paper, and in the first step we define the intuitionistic fuzzy radical of an intuitionistic fuzzy submodule of an R-module $M$ and so we present some of its properties similar to the studies in the module theory. Also and related to this definition, we will define intuitionistic fuzzy primary submodule and investigate their properties. We show that for any intuitionistic fuzzy primary submodule of an R-module $M$, with sup property its radical will be an intuitionistic fuzzy prime ideal of $R$. \\

\section{\textbf{Definitions and Preliminary Results}}
In this paper, all rings are commutative with unity$(1\neq 0)$ and all R-modules are unitary. We use the symbol $ \theta $ for the zero element of a R-module. Let $ X $ be a non-empty set. An intuitionistic fuzzy subset is a function $ A=(\mu_A,\nu_A)$, which $ \mu_A(x) $ denotes the degree of membership and $ \nu_A(x) $ denotes the degree of non-membership of $ x \in X $ to subset $ A $, such that $0 \leq \mu_A(x) + \nu_A(x) \leq 1$ for all $ x \in X $, can be denoted by $A = \lbrace x ,\mu_A(x),\nu_A(x)\rbrace $. It is clear that when $\mu_A(x) + \nu_A(x) = 1$, then $ A $ will be a fuzzy subset of $ X $. The class of all intuitionistic fuzzy subsets of $ X $ is denoted by $ IFS(X) $. (\cite{8},\cite{18})
    
In this section, we give some basic concepts of intuitionistic fuzzy sets and intuitionistic fuzzy modules.
For simplicity, we sometimes denote each intuitionistic fuzzy set $A = \lbrace x ,\mu_A(x),\nu_A(x)\rbrace$ by $ A=(\mu_A,\nu_A) $. 
\begin{definition}\label{d1.2} \cite{8}
Let $X$ be a non-empty set and $A=(\mu_A,\nu_A), B=(\mu_B,\nu_B)$ be intuitionistic fuzzy sets of $X$. Then
\begin{equation}
\label{eq1}
A \subseteq B \Leftrightarrow\mu_A(x) \leq \mu_B(x)\quad and\quad \nu_A(x) \geq \nu_B(x)\quad for\ all\: x\in X
\end{equation}
\begin{equation}
\label{eq2}
A \cup B = \lbrace(x, \mu_A(x)\vee \mu_B(x), \nu_A(x)\wedge \nu_B(x)) \vert x \in X\rbrace\\
\end{equation}
\begin{equation}
\label{eq3}
A \cap B = \lbrace(x, \mu_A(x)\wedge \mu_B(x), \nu_A(x)\vee \nu_B(x)) \vert x \in X\rbrace\\
\end{equation}
\end{definition}

\begin{definition}\cite{15}
Let R be a ring and $ A\in IFS(R)$. Then $ A $ is called an intuitionistic fuzzy ideal of R if for all $ x , y \in R$, the followings hold:

1. $\mu_A (x-y)\geq  \mu_A (x)\wedge \mu_A (y) \: ,\: \nu_A (x-y) \leq \nu_ (x) \vee \nu_A (y);$

2. $ \mu_A (xy) \geq \mu_A (x) \vee \mu_A (y) \: ,\: \nu_A (xy) \leq \nu_A (x) \wedge\nu_A (y).$
 
The class of intuitionistic fuzzy ideals of R is denoted by IFI(R).

\end{definition}
\begin{definition}\cite{3}
Let $ M $ be an R-module and $ A\in IFS(M) $. Then $ A $ is called an intuitionistic fuzzy submodule if

1. $\mu _{A}(\theta) = 1 \:  , \: \nu_A (\theta ) = 0$;

2. $\mu_A (x + y )\geq \mu_A (x ) \wedge \mu_A ( y ) \: , \: \nu_A(x + y )\leq \nu_A (x ) \vee\nu_A ( y)$ \quad for all $ x , y \in M $;

3. $ \mu_A (rx )\geq \mu_A ( x ) \: , \: \nu_A (rx ) \leq \nu_A (x)$ \quad for all $ x \in M$ and $ r\in R $.

We denote the class of all intuitionistic fuzzy submodule of an R-module $ M $, by $ IFM(M) $.

Notice that when $ R=M $, then $ A\in IFM(M) $ if and only if $ \mu _{A}(\theta) = 1 \:  , \: \nu_A (\theta ) = 0 $ and $ A\in IFI(R) $.
\end{definition}
 Now, for an intuitionistic fuzzy submodule, we define intuitionistic fuzzy radical.
 \begin{definition}

Let $M$ be an R-module and  $ A\in IFS(M)$. The intuitionistic fuzzy subset $\sqrt{A}=( \sqrt{\mu_A},\sqrt{\nu_A} )$ of $R$ is called the intuitionistic fuzzy radical of $A$ and defined as follows: 
\begin{equation}
\label{eq4}
 \sqrt{\mu_A}(r) =\bigvee\limits_{n\in\mathbb{N}} \bigwedge \limits_{m\in M}\mu_{A}(r^{n}.m)\quad and \quad \sqrt{\nu_A}(r)=\bigwedge\limits_{n\in\mathbb{N}}\bigvee\limits_{m\in M}\nu_A(r^{n}.m) 
\end{equation}
for all $ r\in R $, $ m\in M $, $ n\in \mathbb{N} $. 
\end{definition}
It is clear that when $ M=R $, then the intuitionistic fuzzy radical of an intuitionistic fuzzy ideal of $ R $ will be defined by:
$\sqrt{A}=(\sqrt{\mu_A},\sqrt{\nu_A})$ where $\sqrt{\mu_A}(r) =\bigvee\limits_{n\in\mathbb{N}} \mu_{A}(r^{n})$ and $\sqrt{\nu_A}(r) =\bigwedge\limits_{n\in\mathbb{N}} \nu_{A}(r^{n}) $.
Now we show that for any intuitionistic fuzzy submodule $ A $ of $ M $, $\sqrt{A}$ is an intuitionistic fuzzy ideal of R. 
\begin{proposition} \label{pr1.3}
Let $A=(\mu_A,\nu_A)$ be an intuitionistic fuzzy submodule of $M$ then $\sqrt{A}=(\sqrt{\mu_A},\sqrt{\nu_A})$ is an intuitionistic fuzzy ideal of R.
\end{proposition}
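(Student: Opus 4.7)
The plan is to verify the two defining conditions of an intuitionistic fuzzy ideal for $\sqrt{A}=(\sqrt{\mu_A},\sqrt{\nu_A})$, namely compatibility with subtraction and with multiplication, together with the consistency bound $\sqrt{\mu_A}(r)+\sqrt{\nu_A}(r)\le 1$ needed for $\sqrt{A}$ to lie in $IFS(R)$.

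I would dispose of the multiplicative axiom first, since it is essentially immediate. Using $(xy)^n=x^ny^n$ (which requires the commutativity of $R$) together with axiom 3 of the submodule definition, for each $m\in M$ one has $\mu_A((xy)^nm)=\mu_A(x^n(y^nm))\ge \mu_A(y^nm)$; taking $\bigwedge_m$ and then $\bigvee_n$ yields $\sqrt{\mu_A}(xy)\ge \sqrt{\mu_A}(y)$, and the symmetric rewriting $(xy)^nm=y^n(x^nm)$ gives $\sqrt{\mu_A}(xy)\ge \sqrt{\mu_A}(x)$, hence $\sqrt{\mu_A}(xy)\ge \sqrt{\mu_A}(x)\vee\sqrt{\mu_A}(y)$. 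The argument for $\sqrt{\nu_A}$ is dual, with sup and inf exchanged throughout.

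The harder part is the subtraction axiom. Given $n_1,n_2\in\mathbb{N}$, I would set $N=n_1+n_2$ and expand
\[
(x-y)^N m=\sum_{k=0}^{N}\binom{N}{k}\,x^k(-y)^{N-k}m
\]
via the binomial theorem. For each summand $t_k$, either $k\ge n_1$ (in which case $t_k=x^{n_1}m_k$ for some $m_k\in M$) or $k<n_1$ (forcing $N-k\ge n_2$, so $t_k=\pm\, y^{n_2}m_k'$). Because $-1\in R$, axiom 3 forces $\mu_A(-z)=\mu_A(z)$, so in either case $\mu_A(t_k)\ge \bigwedge_m\mu_A(x^{n_1}m)\wedge \bigwedge_m\mu_A(y^{n_2}m)$. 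Iterating axiom 2 gives $\mu_A\bigl(\sum_k t_k\bigr)\ge \bigwedge_k \mu_A(t_k)$; applying $\bigwedge_m$ and invoking the definition of $\sqrt{\mu_A}$ at the single index $N$ then yields $\sqrt{\mu_A}(x-y)\ge \bigwedge_m\mu_A(x^{n_1}m)\wedge \bigwedge_m\mu_A(y^{n_2}m)$ for every $n_1,n_2$. Finally, $\bigvee_{n_1,n_2}$ together with the identity $\sup_{n_1,n_2}[a(n_1)\wedge b(n_2)]=(\sup_{n_1}a(n_1))\wedge (\sup_{n_2}b(n_2))$ delivers $\sqrt{\mu_A}(x-y)\ge \sqrt{\mu_A}(x)\wedge \sqrt{\mu_A}(y)$. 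The dual inequality for $\sqrt{\nu_A}$ follows by the same template with inequalities reversed.

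The main obstacle I anticipate is this binomial manipulation: one must choose $N$ so that every monomial in the expansion is absorbed by $x^{n_1}M$ or $y^{n_2}M$, and then commute the resulting wedge-vee nests in the correct order. The consistency bound is essentially free: $\mu_A(r^nm)+\nu_A(r^nm)\le 1$ yields $\bigwedge_m\mu_A(r^nm)\le 1-\bigvee_m\nu_A(r^nm)\le 1-\sqrt{\nu_A}(r)$ for every $n$, and taking $\bigvee_n$ on the left gives $\sqrt{\mu_A}(r)+\sqrt{\nu_A}(r)\le 1$.
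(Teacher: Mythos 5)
Your proposal is correct, and on the crucial additive axiom it takes a genuinely different route from the paper. You fix two independent exponents $n_1,n_2$, expand $(x-y)^{n_1+n_2}$, observe that every binomial term lies in $x^{n_1}M$ or in $y^{n_2}M$ (up to sign), and finish with the unconditional lattice identity $\sup_{n_1,n_2}\bigl[a(n_1)\wedge b(n_2)\bigr]=\bigl(\sup_{n_1}a(n_1)\bigr)\wedge\bigl(\sup_{n_2}b(n_2)\bigr)$ --- the classical $(x+y)^{n_1+n_2}$ trick for radicals of ideals. The paper instead keeps a single index $n$, splits $\bigvee_n$ into even and odd exponents $(r_1+r_2)^{2n}$ and $(r_1+r_2)^{2n+1}$ so that each binomial term dominates an $r_1^n$ or $r_2^n$ term, and then needs $\bigvee_n(a_n\wedge b_n)=\bigl(\bigvee_n a_n\bigr)\wedge\bigl(\bigvee_n b_n\bigr)$, which is valid only because $a_n=\bigwedge_m\mu_A(r_1^n.m)$ is nondecreasing in $n$ (a consequence of axiom 3 that the paper uses tacitly); your two-index scheme never needs this monotonicity. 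Your version also pays off on the $\nu_A$ side: the paper's $\nu$ computation drops the even/odd device and compresses the join $\nu_A(r_1^{n}.m)\vee\nu_A(r_1^{n-1}.m)\vee\cdots\vee\nu_A(r_2^{n}.m)$ to $\nu_A(r_1^{n}.m)\vee\nu_A(r_2^{n}.m)$, which is not justified as written (since $\nu_A$ grows as exponents shrink, that join is controlled by the \emph{smallest} exponents), whereas your dual argument goes through verbatim. Two smaller differences: you verify the subtraction axiom directly by absorbing signs via $\mu_A(-z)=\mu_A(z)$, while the paper proves additivity and treats $\sqrt{\mu_A}(-r)=\sqrt{\mu_A}(r)$ as a separate step (equivalent bookkeeping); and you check the consistency bound $\sqrt{\mu_A}(r)+\sqrt{\nu_A}(r)\le 1$ needed for $\sqrt{A}\in IFS(R)$, a point the paper omits entirely. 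The multiplicative axiom is handled essentially identically in both (your inequalities are stated correctly, where the paper has an apparent typo asserting equality for $\sqrt{\nu_A}(r_1r_2)$).
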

\begin{proof}
Let $r_1 , r_2\in R$ then

\begin{small}
$ \sqrt{\mu}_A(r_1+r_2)=\bigvee\limits_{n\in\mathbb{N}} \bigwedge \limits_{m\in M}\mu_{A}((r_1+r_2)^{n}.m)=
\lbrace\bigvee\limits_{n\in\mathbb{N}}\bigwedge\limits_{m\in M}\mu_A((r_1+r_2)^{2n}.m)\rbrace \vee\lbrace\bigvee\limits_{n\in\mathbb{N}}\bigwedge\limits_{m\in M}\mu_A((r_1+~r_2)^{2n+1}.m)\rbrace  $.
\end{small}

Now we have
\begin{equation*}
\begin{array}{ll}
&\bigwedge\limits_{m\in M}\mu_A((r_1+r_2)^{2n}.m)= \bigwedge\limits_{m\in M}\mu_A((\sum\limits_{i=0}^{2n}C_{i}^{2n}r_1^{2n-i}r_2^{i}).m)\geq\\
 &\bigwedge \limits_{m\in M}\lbrace\mu_A(r_1^{2n}.m)\wedge  \mu_A(r_1^{2n-1}.r_2.m)\wedge ... \wedge  \mu_A(r_1^{n}r_2^{n}.m)\wedge  \mu_A(r_1^{n-1}r_2^{n+1}.m)\wedge ... \wedge \mu_A(r_2^{2n}.m)\rbrace\\
& \geq \bigwedge \limits_{m\in M}\lbrace\mu_A(r_1^{2n}.m)\wedge \mu_A(r_1^{2n-1}.m)\wedge...\wedge\mu_A(r_1^{n}.m) ... \wedge  \mu_A(r_2^{n+1}.m)\wedge ... \wedge \mu_A(r_2^{2n}.m)\rbrace\\
& = \bigwedge \limits_{m\in M}\lbrace\mu_A(r_1^{n}.m)\wedge \mu_A(r_2^{n+1}.m)\rbrace\\
& \geq \bigwedge \limits_{m\in M}\lbrace\mu_A(r_1^{n}.m)\wedge \mu_A(r_2^{n}.m)\rbrace\\
& = (\bigwedge \limits_{m\in M}\mu_A(r_1^{n}.m))\wedge (\bigwedge \limits_{m\in M}\mu_A(r_2^{n}.m))
\end{array}
\end{equation*}
Also
\begin{equation*}
\begin{array}{ll}
&\bigwedge\limits_{m\in M}\mu_A((r_1+r_2)^{2n+1}.m)= \bigwedge\limits_{m\in M}\mu_A((\sum\limits_{i=0}^{2n+1}C_{i}^{2n+1}r_1^{2n+1-i}r_2^{i}).m)\\
&\geq \bigwedge \limits_{m\in M}\lbrace\mu_A(r_1^{2n+1}.m)\wedge  \mu_A(r_1^{2n}.r_2.m)\wedge ... \wedge  \mu_A(r_1^{n}r_2^{n+1}.m)\wedge  ... \wedge \mu_A(r_2^{2n+1}.m)\rbrace\\
& \geq \bigwedge \limits_{m\in M}\lbrace\mu_A(r_1^{2n+1}.m)\wedge \mu_A(r_1^{2n}.m)\wedge...\wedge\mu_A(r_1^{n+1}.m) ... \wedge  \mu_A(r_2^{n+1}.m)\wedge ... \wedge \mu_A(r_2^{2n+1}.m)\rbrace\\
& = \bigwedge \limits_{m\in M}\lbrace\mu_A(r_1^{n+1}.m)\wedge \mu_A(r_2^{n+1}.m)\rbrace\\
& \geq \bigwedge \limits_{m\in M}\lbrace\mu_A(r_1^{n}.m)\wedge \mu_A(r_2^{n}.m)\rbrace\\
& = (\bigwedge \limits_{m\in M}\mu_A(r_1^{n}.m))\wedge (\bigwedge \limits_{m\in M}\mu_A(r_2^{n}.m))
\end{array}
\end{equation*}
 Therefore
\begin{equation*}
\begin{array}{ll}
& \sqrt{\mu_A}(r_1+r_2) = \lbrace\bigvee\limits_{n\in\mathbb{N}}(\bigwedge\limits_{m\in M}\mu_A((r_1+r_2)^{2n}.m)\rbrace \vee\lbrace\bigvee\limits_{n\in\mathbb{N}}(\bigwedge\limits_{m\in M}\mu_A((r_1+r_2)^{2n+1}.m)\rbrace \\
 & \geq \bigvee\limits_{n\in\mathbb{N}}\lbrace(\bigwedge\limits_{m\in M}\mu_A(r_1^{n}.m))\wedge (\bigwedge\limits_{m\in M}\mu_A(r_2^{n}.m))\rbrace \vee \bigvee\limits_{n\in\mathbb{N}}\lbrace(\bigwedge\limits_{m\in M}\mu_A(r_1^{n}.m))\wedge (\bigwedge\limits_{m\in M}\mu_A(r_2^{n}.m))\rbrace\\
 & = \bigvee\limits_{n\in\mathbb{N}}\lbrace(\bigwedge\limits_{m\in M}\mu_A(r_1^{n}.m))\wedge (\bigwedge\limits_{m\in M}\mu_A(r_2^{n}.m))\rbrace\\
 & = (\bigvee\limits_{n\in\mathbb{N}}\bigwedge\limits_{m\in M}\mu_A(r_1^{n}.m))\wedge (\bigvee\limits_{n\in\mathbb{N}}\bigwedge\limits_{m\in M}\mu_A(r_2^{n}.m))\\
 & =\sqrt{\mu_A}(r_1)\wedge\sqrt{\mu_A}(r_2)
\end{array}
\end{equation*}
Similarly, we show that $\sqrt{\nu_A}(r_1+r_2)\leq\sqrt{\nu_A}(r_1)\vee\sqrt{\nu_A}(r_2).$
\begin{equation*}
\begin{array}{ll}
 \sqrt{\nu}_A(r_1+r_2) & =\bigwedge\limits_{n\in\mathbb{N}} \bigvee \limits_{m\in M}\nu_{A}((r_1+r_2)^{n}.m)=\bigwedge\limits_{n\in\mathbb{N}}\bigvee\limits_{m\in M}\nu_A((\sum\limits_{i=0}^{n}C_{i}^{n}r_1^{n-i}r_2^{i}).m)\\
& \leq\bigwedge\limits_{n\in\mathbb{N}}\bigvee \limits_{m\in M}\lbrace\nu_A(r_1^{n}.m)\vee\nu_A(r_1^{n-1}r_{2}.m)\vee ... \vee  \nu_A(r_1r_2^{n-1}.m)\vee \nu_A(r_2^{n}.m)\rbrace\\
& \leq \bigwedge \limits_{n\in\mathbb{N}}\bigvee \limits_{m\in M}\lbrace\nu_A(r_1^{n}.m)\vee \nu_A(r_1^{n-1}m)\vee...\vee\nu_A(r_2^{n-1}.m) \vee \nu_A(r_2^{n}.m)\rbrace\\
&(since\ A\ is\ an\ intuitionistic\ fuzzy\ submodule\ of\ M)\\
& = \bigwedge \limits_{n\in\mathbb{N}}\bigvee \limits_{m\in M}\lbrace\nu_A(r_1^{n}.m)\vee \nu_A(r_2^{n}.m)\rbrace\\
& = (\bigwedge \limits_{n\in\mathbb{N}}\bigvee \limits_{m\in M}\nu_A(r_1^{n}.m))\vee (\bigwedge \limits_{n\in\mathbb{N}}\bigvee \limits_{m\in M}\nu_A(r_2^{n}.m))\\
& =\sqrt{\nu_A}(r_1)\vee\sqrt{\nu_A}(r_2)
\end{array}
\end{equation*}

And
\begin{small}
$\sqrt{\mu_A}(r_1r_2)= \lbrace\bigvee\limits_{n\in\mathbb{N}}(\bigwedge\limits_{m\in M}\mu_A((r_1r_2)^{n}.m)\rbrace = \lbrace\bigvee\limits_{n\in\mathbb{N}}(\bigwedge\limits_{m\in M}\mu_A((r_1^{n}r_2^{n}.m)\rbrace \geq$
 $\lbrace\bigvee\limits_{n\in\mathbb{N}}(\bigwedge\limits_{m\in M}\mu_A((r_1^{n}.m)\rbrace= \\
  \sqrt{\mu_A}(r_1)$.
\end{small}
Similarly, $\sqrt{\mu_A}(r_1r_2)\geq\sqrt{\mu_A}(r_2).$ 
Hence $\sqrt{\mu_A}(r_1r_2)\geq\sqrt{\mu_A}(r_1)\vee\sqrt{\mu_A}(r_2).$

Also,$\sqrt{\nu_A}(r_1r_2)= \sqrt{\nu_A}(r_1)$ and 
$\sqrt{\nu_A}(r_1r_2)\leq\sqrt{\nu_A}(r_2)$, hence $\sqrt{\nu_A}(r_1r_2)\leq\sqrt{\nu_A}(r_1)\wedge\sqrt{\nu_A}(r_2).$
Next for $ r\in R $
 
$\sqrt{\mu_A}(-r)=\bigvee\limits_{n\in\mathbb{N}}(\bigwedge\limits_{m\in M}\mu_A((-r)^{n}.m) =\bigvee\limits_{n\in\mathbb{N}}(\bigwedge\limits_{m\in M}\mu_A(r^{n}.m) =\sqrt{\mu_A}(r)$;

$\sqrt{\nu_A}(-r)=\bigwedge\limits_{n\in\mathbb{N}}(\bigvee\limits_{m\in M}\nu_A((-r)^{n}.m) =\bigwedge\limits_{n\in\mathbb{N}}(\bigvee\limits_{m\in M}\nu_A(r^{n}m))=\sqrt{\nu_A}(r).$

Therefore $ \sqrt{A} $ is an intuitionistic fuzzy ideal of $R$. 
\end{proof}
In the following proposition we will give some properties of intuitionistic fuzzy radicals.
\begin{proposition}
Let $ A $, $ B \in IFM(M)$ then\\
1. $\sqrt{\sqrt{A}}=\sqrt{A}$\\
2. $A\subseteq B \Rightarrow \sqrt{A}\subseteq \sqrt{B}$\\
3. $ \sqrt{A\cap B}=\sqrt{A}\cap \sqrt{B} $\\
4. $ \sqrt{\sqrt{A}+\sqrt{B}}\subseteq \sqrt{A+B} $\\
\end{proposition}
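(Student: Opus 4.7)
The plan is to establish the four parts in the order (2), (1), (3), (4), since each subsequent part leans on the earlier ones. Part (2) is immediate from the definitions: if $\mu_A\leq\mu_B$ and $\nu_A\geq\nu_B$ pointwise on $M$, then $\mu_A(r^{n}.m)\leq\mu_B(r^{n}.m)$ for all $r,m,n$, and these inequalities descend through $\bigwedge_{m\in M}$ and $\bigvee_{n\in\mathbb{N}}$ to give $\sqrt{\mu_A}\leq\sqrt{\mu_B}$ and dually $\sqrt{\nu_A}\geq\sqrt{\nu_B}$, so $\sqrt{A}\subseteq\sqrt{B}$.

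For (1), by Proposition~\ref{pr1.3} $\sqrt{A}$ is an intuitionistic fuzzy ideal of $R$, so the simplified radical formula noted in the text applies: $\sqrt{\sqrt{\mu_A}}(r)=\bigvee_{k\in\mathbb{N}}\sqrt{\mu_A}(r^{k})=\bigvee_{k,n\in\mathbb{N}}\bigwedge_{m\in M}\mu_A(r^{kn}.m)$. Reindexing $j=kn$ sweeps out all of $\mathbb{N}$ (since $j=j\cdot 1$), so the double supremum collapses to $\bigvee_{j\in\mathbb{N}}\bigwedge_{m}\mu_A(r^{j}.m)=\sqrt{\mu_A}(r)$. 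The $\nu$ identity is handled dually.

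For (3), one containment $\sqrt{A\cap B}\subseteq\sqrt{A}\cap\sqrt{B}$ is immediate from (2) applied to $A\cap B\subseteq A,B$. The reverse is the main obstacle. The key observation is that $n\mapsto\bigwedge_{m}\mu_A(r^{n}.m)$ is non-decreasing in $n$, because $\mu_A(r^{n+1}.m)=\mu_A(r\cdot r^{n}.m)\geq\mu_A(r^{n}.m)$ by axiom~3 of an intuitionistic fuzzy submodule; the analogous (reversed) monotonicity holds for the $\nu$'s. For non-decreasing $f,g:\mathbb{N}\to[0,1]$ one has $(\bigvee_n f(n))\wedge(\bigvee_n g(n))=\bigvee_n(f(n)\wedge g(n))$, the nontrivial direction following from $f(n)\wedge g(k)\leq f(N)\wedge g(N)$ with $N=\max(n,k)$. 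Combining this with the trivial identity $\bigwedge_{m}\mu_A(r^{n}.m)\wedge\bigwedge_{m}\mu_B(r^{n}.m)=\bigwedge_{m}\bigl(\mu_A(r^{n}.m)\wedge\mu_B(r^{n}.m)\bigr)$ yields $\sqrt{\mu_A}(r)\wedge\sqrt{\mu_B}(r)=\sqrt{\mu_{A\cap B}}(r)$, and the $\nu$-side is dual.

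Finally, (4) chains the previous parts. Since $A\subseteq A+B$ and $B\subseteq A+B$ (take the decomposition $x=x+0$ and use $\mu_B(\theta)=1$, $\nu_B(\theta)=0$), part (2) gives $\sqrt{A},\sqrt{B}\subseteq\sqrt{A+B}$. Because $\sqrt{A+B}$ is an intuitionistic fuzzy ideal of $R$, the ideal-sum $\sqrt{A}+\sqrt{B}$ is also contained in $\sqrt{A+B}$: for any decomposition $r=s+t$ one has $\sqrt{\mu_A}(s)\wedge\sqrt{\mu_B}(t)\leq\sqrt{\mu_{A+B}}(s)\wedge\sqrt{\mu_{A+B}}(t)\leq\sqrt{\mu_{A+B}}(s+t)=\sqrt{\mu_{A+B}}(r)$, and taking $\bigvee$ over decompositions gives the membership inequality (the $\nu$ part is dual). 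A final application of (2) and (1) then yields $\sqrt{\sqrt{A}+\sqrt{B}}\subseteq\sqrt{\sqrt{A+B}}=\sqrt{A+B}$, as required.
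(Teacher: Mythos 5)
Your proof is correct, and its skeleton matches the paper's: idempotence via the collapse of the double supremum $\bigvee_{k,n}\bigwedge_m\mu_A(r^{kn}.m)$ using the ideal form of the radical from Proposition \ref{pr1.3}, and part (4) via $\sqrt{A},\sqrt{B}\subseteq\sqrt{A+B}$ followed by $\sqrt{\sqrt{A}+\sqrt{B}}\subseteq\sqrt{\sqrt{A+B}}=\sqrt{A+B}$. Where you genuinely diverge is in part (3): the paper simply writes $\bigvee_n\bigwedge_m\bigl(\mu_A(r^n.m)\wedge\mu_B(r^n.m)\bigr)=\bigl(\bigvee_n\bigwedge_m\mu_A(r^n.m)\bigr)\wedge\bigl(\bigvee_n\bigwedge_m\mu_B(r^n.m)\bigr)$ as an unexplained equality, and this interchange of $\bigvee_n$ with $\wedge$ is false for arbitrary sequences (take $a_n$ and $b_n$ alternating $0,1$ out of phase). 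Your observation that $n\mapsto\bigwedge_m\mu_A(r^n.m)$ is non-decreasing (from axiom 3, $\mu_A(r^{n+1}.m)=\mu_A(r\cdot r^n.m)\geq\mu_A(r^n.m)$), together with the lemma that for monotone sequences $\bigvee_n(f(n)\wedge g(n))=(\bigvee_n f(n))\wedge(\bigvee_n g(n))$ via $N=\max(n,k)$, is exactly the missing justification, so your proof patches a real gap in the paper's. Two smaller additions on your side: you actually prove part (2), which the paper omits entirely, and in part (4) you make explicit why $\sqrt{A}+\sqrt{B}\subseteq\sqrt{A+B}$ (using that $\sqrt{A+B}$ is an intuitionistic fuzzy ideal, hence closed under sums), a step the paper asserts with a bare ``hence.'' The only point worth flagging in your write-up is the tacit use of $A+B\in IFM(M)$ when invoking Proposition \ref{pr1.3} and idempotence for $\sqrt{A+B}$; this is standard and the paper assumes it silently too, but a one-line remark would make your argument self-contained.
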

\begin{proof}
1. Since$ \sqrt{\sqrt{A}}=(\sqrt{\sqrt{\mu}_A}, \sqrt{\sqrt{\nu}_A} )$, so by Proposition \ref{pr1.3} $ \sqrt{\sqrt{A}}  $ is an intuitionstic fuzzy ideal of $ R $ then
$\sqrt{\sqrt{\mu_A}}(r)=\bigvee\limits_{n\in\mathbb{N}}\sqrt{\mu_A}(r^n)
=\bigvee\limits_{n\in\mathbb{N}}(\bigvee\limits_{n'\in\mathbb{N}}\bigwedge\limits_{m\in M}\mu_A ((r^{n})^{n'}.m)=\bigvee\limits_{n\in\mathbb{N}}\bigvee\limits_{n'\in\mathbb{N}}(\bigwedge\limits_{m\in M}\mu_A(r^{nn'}.m))$
$=\bigvee\limits_{n''\in\mathbb{N}}\bigwedge\limits_{m\in M}\mu_A(r^{n''}.m)=\sqrt{\mu_A}(r).$   
It is clear that $\sqrt{\sqrt{\nu_A}}(r)=\sqrt{\nu_A}(r)$, since the proof is the dual of the proof of the first part.
Hence $\sqrt{\sqrt{A}}=(\sqrt{\mu_A},\sqrt{\nu_A})=\sqrt{A}$.\\
3. By \ref{eq3}\ ,\ $A \cap B = \lbrace(m, \mu_A(m)\wedge \mu_B(m), \nu_A(m)\vee \nu_B(m)) : m \in M\rbrace$, hence
\begin{equation*}
\begin{array}{ll}
\sqrt{\mu_A \wedge \mu_B}(r) &= \bigvee\limits_{n\in\mathbb{N}}\bigwedge\limits_{m\in M}(\mu_A \wedge \mu_B)({r^n}.m)\\
 &=\bigvee\limits_{n\in\mathbb{N}}\bigwedge\limits_{m\in M}(\mu_A({r^n}.m) \wedge \mu_B({r^n}.m))\\
 &=(\bigvee\limits_{n\in\mathbb{N}}\bigwedge\limits_{m\in M}\mu_A({r^n}.m))\wedge (\bigvee\limits_{n\in\mathbb{N}}\bigwedge\limits_{m\in M}\mu_B({r^n}.m))\\
 &=(\sqrt{\mu_A}(r)\wedge \sqrt{\mu_A}(r)).
 \end{array}
 \end{equation*}
Similarly,
\begin{equation*}
\begin{array}{ll}
\sqrt{\nu_A \vee \nu_B}(r) &= \bigwedge\limits_{n\in\mathbb{N}}\bigvee\limits_{m\in M}(\nu_A \vee \nu_B)({r^n}.m)\\
 &=\bigwedge\limits_{n\in\mathbb{N}}\bigvee\limits_{m\in M}(\nu_A({r^n}.m) \vee \nu_B({r^n}.m))\\
 &=(\bigwedge\limits_{n\in\mathbb{N}}\bigvee\limits_{m\in M}\nu_A({r^n}.m))\vee (\bigwedge\limits_{n\in\mathbb{N}}\bigvee\limits_{m\in M}\nu_B({r^n}.m))\\
 &=(\sqrt{\nu_A}(r)\vee \sqrt{\nu_A}(r)). 
 \end{array}
\end{equation*}
Therefore $ \sqrt{A\cap B}=(r,\sqrt{\mu_A}(r)\wedge \sqrt{\mu_B}(r), \sqrt{\nu_A}(r)\vee \sqrt{\nu_B}(r))=\sqrt{A}\cap \sqrt{B}  $.

4. $ \mu_A \subseteq \mu_A + \mu_B $ \ because 
$ (\mu_A + \mu_B)(x)=\vee\lbrace \mu_A(y) \wedge \mu_B(z)\ \vert \ y+z=x\rbrace \geq \lbrace \mu_A(x) \wedge \mu_B(\theta)\ ;\ \mu_B(\theta)=1\rbrace=\mu_A(x)$.
 And similarly, $ \mu_B \subseteq \mu_A + \mu_B. $ 
 
 In the same way, we have $ \nu_A\supseteq \nu_A + \nu_B $ because
  $(\nu_A + \nu_B)(x)=\wedge\lbrace \nu_A(y) \vee \nu_B(z)\ \vert\ y+z=x\rbrace \leq \lbrace \nu_A(x) \vee \nu_B(\theta)\ ;\ \nu_B(\theta)=0\rbrace=\nu_A(x)$. Also $ \nu_B\supseteq \nu_A+\nu_B $.
 
Now we have  $ \sqrt{\mu_A} \subseteq \sqrt{\mu_A + \mu_B} $ \ , \ $ \sqrt{\mu_B} \subseteq \sqrt{\mu_A + \mu_B}$ , hence $ \sqrt{\mu_A}+\sqrt{\mu_B}\subseteq \sqrt{\mu_A + \mu_B}\ \Longrightarrow\ \sqrt{\sqrt{\mu_A}+\sqrt{\mu_B}}\subseteq \sqrt{\sqrt{\mu_A + \mu_B}}=\sqrt{\mu_A + \mu_B}\ . $\\  
And also\  $\sqrt{\nu_A} \supseteq \sqrt{\nu_A + \nu_B} $ \ , \ $ \sqrt{\nu_B} \supseteq \sqrt{\nu_A + \nu_B} $\ hence
$ \sqrt{\nu_A}+\sqrt{\nu_B}\supseteq \sqrt{\nu_A + \nu_B}\ \Longrightarrow\ \sqrt{\sqrt{\nu_A}+\sqrt{\nu_B}}\supseteq \sqrt{\sqrt{\nu_A + \nu_B}}=\sqrt{\nu_A + \nu_B}$. Therefore $ \sqrt{\sqrt{A}+ \sqrt{B}}\subseteq \sqrt{A+B} $. 
\end{proof}
\begin{definition} \cite{17}
Let $ A $ be a submodule of $ M $. Then intuitionistic fuzzy characteristic function $\chi_{A}=(\mu_{\chi_A} ,\nu_{\chi_A} )$ is defined as follows\\
\begin{equation}
\label{eq5}
\mu_{\chi_A}(x)=\begin{cases}
1 & \text{if\ $x\in A$}\\
0 & \text{if\ $x\notin A$}
\end{cases}\quad  ,\qquad
\nu_{\chi_A}(x)=\begin{cases}
0 & \text{if \ $x\in A$}\\
1 & \text{if\ $x\notin A$}
\end{cases} 
\end{equation}
\end{definition}
\begin{proposition}\label{pr2.3} 
Let $ A $ be a submodule of $ M $, then $ \sqrt{\chi_A}=\chi_{\sqrt{A}} $ where $ \chi_A $ is intuitionistic fuzzy characteristic function of $ A $.
\end{proposition}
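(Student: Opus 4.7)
The plan is to recognize that both sides are $\{0,1\}$-valued, and then do a direct case analysis on whether or not $r$ lies in the crisp radical $\sqrt{A} = \{r \in R : r^n M \subseteq A \text{ for some } n \in \mathbb{N}\}$. Because $\mu_{\chi_A}$ and $\nu_{\chi_A}$ only take the values $0$ and $1$, every infimum and supremum appearing in the definitions of $\sqrt{\mu_{\chi_A}}$ and $\sqrt{\nu_{\chi_A}}$ collapses, so the argument is essentially bookkeeping.

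First I would fix $r\in R$ and split into two cases. In the first case, assume $r\in\sqrt{A}$, so there exists some $n_0\in\mathbb{N}$ with $r^{n_0}m\in A$ for every $m\in M$. Then $\mu_{\chi_A}(r^{n_0}m)=1$ for all $m$, which forces $\bigwedge_{m\in M}\mu_{\chi_A}(r^{n_0}m)=1$, and taking the join over $n$ gives $\sqrt{\mu_{\chi_A}}(r)=1$. Dually, $\nu_{\chi_A}(r^{n_0}m)=0$ for all $m$, hence $\bigvee_{m\in M}\nu_{\chi_A}(r^{n_0}m)=0$ and $\sqrt{\nu_{\chi_A}}(r)=0$. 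This matches $\mu_{\chi_{\sqrt{A}}}(r)=1$ and $\nu_{\chi_{\sqrt{A}}}(r)=0$.

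In the second case, assume $r\notin\sqrt{A}$, so for every $n\in\mathbb{N}$ there exists some $m_n\in M$ with $r^n m_n\notin A$. Then $\mu_{\chi_A}(r^n m_n)=0$, which immediately yields $\bigwedge_{m\in M}\mu_{\chi_A}(r^n m)=0$ for each $n$, and hence $\sqrt{\mu_{\chi_A}}(r)=0$. Similarly $\nu_{\chi_A}(r^n m_n)=1$, so $\bigvee_{m\in M}\nu_{\chi_A}(r^n m)=1$ for each $n$ and $\sqrt{\nu_{\chi_A}}(r)=1$, again matching $\chi_{\sqrt{A}}(r)$.

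Combining the two cases gives $\sqrt{\chi_A}=\chi_{\sqrt{A}}$ pointwise on $R$, completing the proof. There is no genuine obstacle here; the only subtle point is being explicit about the definition of $\sqrt{A}$ for a crisp submodule (namely $\sqrt{(A:M)}$), which is what makes the quantifier over $m\in M$ in the definition of $\sqrt{\mu_A},\sqrt{\nu_A}$ collapse correctly.
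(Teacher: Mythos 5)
Your proof is correct and follows essentially the same route as the paper's: a two-case analysis that exploits the $\{0,1\}$-valuedness of $\chi_A$ to collapse the suprema and infima in the definition of $\sqrt{\chi_A}$. If anything, your version is more careful than the paper's, since you make the quantifiers explicit (``there exists $n_0$ with $r^{n_0}M \subseteq A$'' versus ``for every $n$ there is $m_n$ with $r^n m_n \notin A$''), where the paper's case split on ``$r^n m \in A$'' leaves $n$ and $m$ unquantified.
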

\begin{proof}
Let $ r\in R $, then $ \sqrt{\chi_A}(r)=(\bigvee\limits_{n\in \mathbb{N}}\bigwedge\limits_{m\in M}\mu_{\chi_A} (r^{n}.m) , \bigwedge\limits_{n\in \mathbb{N}}\bigvee\limits_{m\in M}\nu_{\chi_A} (r^{n}.m)).$ 
Now we consider two cases:\\
$ \textit{case 1 :} $ $ r^{n}.m\in A $ then $ \mu_{\chi_A} (r^{n}.m)=1 , \nu_{\chi_A} (r^{n}.m)=0 $ and hence 
$\sqrt{\chi_A}(r)=(1,0) $. By $ r^{n}.m\in A  $ it follows that $ r\in \sqrt{A} $ and then $ \chi_{\sqrt{A}}(r)=(1, 0) $ this mean that $ \sqrt{\chi_A}=\chi_{\sqrt{A}} $ \\
$ \textit{case 2 :} $ $ r^{n}.m\notin A $ then $ \mu_{\chi_A} (r^{n}.m)=0 , \nu_{\chi_A} (r^{n}.m)=1 $ and $\sqrt{\chi_A}(r)=(0,1) $. Now since  $ r^{n}.m\notin A $ for all $ n\in\mathbb{N} $ then $ r\notin \sqrt{A} $ hence $ \chi_{\sqrt{A}}(r)=(0,1) $ and therefore for this case  $ \sqrt{\chi_A}=\chi_{\sqrt{A}}.$
\end{proof}
\begin{definition}\cite{9}
If $ A\in IFS(R) $, then $ A $ is said to have the $sup\ property$ if for every $\emptyset\neq Y\subseteq R $, there is a $ y_0 \in Y $ such that $sup_{y\in Y} \mu_A(y)=\mu_A(y_0) $ , $ inf_{y\in Y} \nu_A(y)=\nu_A(y_0) $.  
\end{definition}
\begin{definition}Suppose that $ A\in IFS(X)$. Then the set $ X ^{(\alpha,\beta)}_{A} = \lbrace x\mid x \in X \: $such that  $\: \mu_A (x )\geq \alpha{ }\   and\   \nu_A (x ) \leq \beta \rbrace$, where $\alpha,\beta \in [0,1]$ with $\alpha+\beta\leq1$, is called $(\alpha,\beta)$- cut set (crisp set ).
\end{definition} 
\begin{proposition}\label{pr3.3} 
Let $A \in IFM(M)$ and suppose that $A $ has sup property, then $ (\sqrt{M_A})^{(\alpha,\beta)}=\sqrt{M_{A}^{(\alpha,\beta)}} $, where 
$ M_{A}^{(\alpha,\beta)}= \lbrace x\in M\ \vert\  \mu_A(x)\geq \alpha \ , \ \nu_A(x)\leq \beta \ ; \ \alpha+\beta\leq 1 \ ; \ (\alpha , \beta)\in Im \mu_A \times Im \nu_A \rbrace $.
\end{proposition}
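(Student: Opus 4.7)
The plan is to show both inclusions between these subsets of $R$. The inclusion $\sqrt{M_A^{(\alpha,\beta)}} \subseteq (\sqrt{M_A})^{(\alpha,\beta)}$ does not need the sup property: if $r^n M \subseteq M_A^{(\alpha,\beta)}$ for some $n \in \mathbb{N}$, then $\mu_A(r^n m) \geq \alpha$ and $\nu_A(r^n m) \leq \beta$ for every $m \in M$, so taking $\bigwedge_{m}$ on the $\mu$-side and $\bigvee_{m}$ on the $\nu$-side, followed by $\bigvee_{n}$ and $\bigwedge_{n}$, carries these inequalities into $\sqrt{\mu_A}(r) \geq \alpha$ and $\sqrt{\nu_A}(r) \leq \beta$, placing $r$ in $(\sqrt{M_A})^{(\alpha,\beta)}$.

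For the converse $(\sqrt{M_A})^{(\alpha,\beta)} \subseteq \sqrt{M_A^{(\alpha,\beta)}}$, assume $\sqrt{\mu_A}(r) \geq \alpha$ and $\sqrt{\nu_A}(r) \leq \beta$. Write $a_n = \bigwedge_{m \in M}\mu_A(r^n m)$ and $b_n = \bigvee_{m \in M}\nu_A(r^n m)$. The submodule axiom $\mu_A(rx)\geq \mu_A(x)$, $\nu_A(rx)\leq \nu_A(x)$ shows $(a_n)$ is non-decreasing and $(b_n)$ is non-increasing in $n$. The crux is to extract indices $n_1, n_2$ with $a_{n_1}\geq \alpha$ and $b_{n_2}\leq \beta$; setting $n=\max(n_1,n_2)$ and invoking monotonicity then gives $a_n \geq \alpha$ and $b_n \leq \beta$, i.e.\ $\mu_A(r^n m)\geq \alpha$ and $\nu_A(r^n m)\leq \beta$ for every $m$, which is precisely $r^n M \subseteq M_A^{(\alpha,\beta)}$, so $r \in \sqrt{M_A^{(\alpha,\beta)}}$.

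The main obstacle lies in producing those indices $n_1, n_2$, since the sup property applies to subsets of $M$ rather than to $\mathbb{N}$-indexed families of suprema and infima. I plan to close this gap via a separation argument. Since $\alpha \in \mathrm{Im}\,\mu_A$ by the cut-set hypothesis, consider $Y = \{m \in M : \mu_A(m) < \alpha\}$; if $Y$ is non-empty, the sup property delivers $y_0 \in Y$ with $\mu_A(y_0) = \sup_{y \in Y}\mu_A(y) =: \alpha' < \alpha$, so every value of $\mu_A$ lies in $\{v : v \geq \alpha\}\cup \{v : v \leq \alpha'\}$. Consequently each $a_n$ also lies in this union, and because $\bigvee_{n}a_n \geq \alpha > \alpha'$, at least one $a_{n_1}$ must satisfy $a_{n_1}\geq \alpha$ (and if $Y$ is empty the conclusion is immediate). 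The symmetric argument applied to $\{m\in M : \nu_A(m)>\beta\}$ produces $n_2$ with $b_{n_2}\leq \beta$. Once these attainments are in hand, the monotonicity step above completes the proof without further work.
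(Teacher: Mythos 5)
Your proof is correct, and its overall skeleton matches the paper's: both reduce the nontrivial inclusion to extracting indices $n_1,n_2$ with $\bigwedge_{m\in M}\mu_A(r^{n_1}m)\geq\alpha$ and $\bigvee_{m\in M}\nu_A(r^{n_2}m)\leq\beta$, and then combine them (you take $n=\max(n_1,n_2)$ after proving monotonicity of $a_n$ and $b_n$; the paper takes $n''=n_1+n_2$ and uses $\mu_A(r^{n_1}r^{n_2}m)\geq\mu_A(r^{n_1}m)$, which is the same observation). The genuine difference is at the crux. The paper extracts $n_1,n_2$ by simply inserting ``Since $A$ has sup property'' into its chain of set equalities, even though the sup property as defined concerns attainment of $\sup\mu_A$ and $\inf\nu_A$ over subsets of the underlying set, whereas what is needed is attainment of the supremum of the $\mathbb{N}$-indexed family $a_n=\bigwedge_{m\in M}\mu_A(r^n m)$, whose terms are infima and need not be values of $\mu_A$ at all. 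You identify exactly this mismatch and close it with a separation lemma: applying the sup property to $Y=\{m\in M:\mu_A(m)<\alpha\}$ yields $\alpha'<\alpha$ such that every value of $\mu_A$ is either $\geq\alpha$ or $\leq\alpha'$, hence each $a_n$ lies in the same union, and $\bigvee_n a_n\geq\alpha>\alpha'$ forces some $a_{n_1}\geq\alpha$; dually for $b_n$ and $\beta$. This argument is sound (note it never actually uses the hypothesis $\alpha\in\mathrm{Im}\,\mu_A$ that you cite --- the sup property on $Y$ alone does the work, so that mention is decorative), and it supplies precisely the justification the published proof leaves implicit; your version is strictly more rigorous at the one step where the paper hand-waves. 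You also observe, correctly and explicitly where the paper does not, that the inclusion $\sqrt{M_A^{(\alpha,\beta)}}\subseteq(\sqrt{M_A})^{(\alpha,\beta)}$ requires no sup property.
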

\begin{proof} Let $ A $ be an intuitionistic fuzzy submodule of $ M $ with sup property, then
\begin{equation*}
\begin{array}{ll}
 (\sqrt{M_A})^{(\alpha,\beta)}&=\lbrace r\in R \ \vert \ \sqrt{\mu_A}(r)\geq \alpha \ , \ \sqrt{\nu_A}(r)\leq \beta \rbrace\\
&=\lbrace r\in R \ \vert \ \bigvee\limits_{n\in \mathbb{N}}\bigwedge\limits_{m\in M}\mu_A (r^{n}.m)\geq \alpha \ , \ \bigwedge\limits_{n\in \mathbb{N}}\bigvee\limits_{m\in M}\nu_A (r^{n}.m)\leq \beta \rbrace\\
&=\lbrace  r\in R \ \vert \   \bigwedge\limits_{m\in M}\mu_A (r^{n_1}.m)\geq\alpha\ , \ 
\bigvee\limits_{m\in M}\nu_A (r^{n_2}.m)\leq\beta \ for\ some\ n_1 , n_2 \in \mathbb{N} \rbrace\\
& ($Since$\ A\ $has$\ sup\ property ) \\
&= \lbrace r\in R \ \vert \ \mu_A (r^{n_1}r^{n_2}.m)\geq\alpha \ , \ \nu_A (r^{n_2}r^{n_1}.m)\leq\beta \ for\ all\ m\in M\rbrace\\ 
 &($Since$\ \mu_A(r^{n_1}r^{n_2}.m)\geq \mu_A(r^{n_1}.m)\geq \alpha , \nu_A(r^{n_1}r^{n_2}.m)\leq \nu_A(r^{n_2}.m)\leq \beta  ) \\
&=\lbrace r\in R \ \vert \ n'' =n_1 + n_2\ ; \ \mu_A(r^{n''}.m)\geq \alpha \ , \ \nu_A(r^{n''}.m)\leq \beta , \quad for\ all\ m\in M \rbrace \\
&=\lbrace r\in R \ \vert \  r^{n''}.m \in {M_A}^{(\alpha , \beta)} , \quad  for\ all\ m\in M \, , \, for\ some\ n''\in \mathbb{N} \rbrace\\
&=\lbrace r\in R \ \vert \  \ r^{n''}.M \subseteq {M_A}^{(\alpha , \beta)}  for\ some\ n''\in \mathbb{N} \rbrace\\
&=\sqrt{M_{A}^{(\alpha,\beta)}} 
\end{array}
\end{equation*}
\end{proof}
The above proposition is also true for strict level cuts, where the $ sup \ property$ is not required.
\section{\textbf{Intuitionistic Fuzzy Primary Submodules}}
In this section, we will give some characterization of an intuitionistic fuzzy primary submodule. we begin with a definition.
\begin{definition}
Let $A = (\mu_A,\nu_A)$ be an intuitionistic fuzzy submodule of $M$. Then the intuitionistic fuzzy subset $ \bar{A}= (\mu_{\bar{A}}, \nu_{\bar{A}}) $, is defined 
\begin{equation}
\mu_{\bar{A}}(r)=\bigwedge\limits_{m\in M} \mu_A(r.m) \ , \  \nu_{\bar{A}}(r)=\bigvee\limits_{m\in M} \nu_A(rm)
 \end{equation}
 where $ \bar{A}=(A:M) $. 
\end{definition}
\begin{proposition}
Let $ A $ be an intuitionistic fuzzy submodule of $ M $. Then $\bar{A}$ is an intuitionistic fuzzy ideal of $ R $.
\end{proposition}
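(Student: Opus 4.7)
The plan is to verify directly the two defining conditions of an intuitionistic fuzzy ideal (additive closure under subtraction and absorptive multiplicativity), transporting each inequality from the submodule axioms of $A$ on $M$ through the infimum/supremum over $m \in M$.

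For the additive axiom, I would first observe the scalar-symmetry identities $\mu_A(-x.m) = \mu_A(x.m)$ and $\nu_A(-x.m) = \nu_A(x.m)$, which follow from the submodule condition $\mu_A(rx)\geq\mu_A(x)$ applied twice with $r = -1$ (and dually for $\nu_A$). Then for $r_1, r_2 \in R$ and any $m \in M$, write $(r_1 - r_2).m = r_1.m + (-r_2).m$, and apply the submodule additivity to obtain $\mu_A((r_1-r_2).m) \geq \mu_A(r_1.m) \wedge \mu_A(r_2.m)$ and $\nu_A((r_1-r_2).m) \leq \nu_A(r_1.m) \vee \nu_A(r_2.m)$. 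Taking $\bigwedge_{m}$ (respectively $\bigvee_{m}$) of both sides and using that infimum of a meet is the meet of infima (and supremum of a join is the join of suprema) yields $\mu_{\bar{A}}(r_1 - r_2) \geq \mu_{\bar{A}}(r_1) \wedge \mu_{\bar{A}}(r_2)$ and $\nu_{\bar{A}}(r_1 - r_2) \leq \nu_{\bar{A}}(r_1) \vee \nu_{\bar{A}}(r_2)$.

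For the multiplicative axiom, the key move is to exploit the associativity $(r_1 r_2).m = r_1.(r_2.m) = r_2.(r_1.m)$. The submodule condition $\mu_A(rx)\geq\mu_A(x)$ then gives, for every $m \in M$, both $\mu_A(r_1r_2.m) = \mu_A(r_1.(r_2.m)) \geq \mu_A(r_2.m) \geq \mu_{\bar{A}}(r_2)$ and $\mu_A(r_1r_2.m) \geq \mu_A(r_1.m) \geq \mu_{\bar{A}}(r_1)$, so after taking $\bigwedge_{m}$ one gets $\mu_{\bar{A}}(r_1 r_2) \geq \mu_{\bar{A}}(r_1) \vee \mu_{\bar{A}}(r_2)$. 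The analogous reasoning with $\leq$ and $\bigvee_m$ handles $\nu_{\bar{A}}$.

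There is no serious obstacle here; the only mildly delicate point is to realize in the multiplicative step that one must first bound $\mu_A(r_1r_2.m)$ by $\mu_{\bar{A}}(r_j)$ uniformly in $m$ (via $\mu_A(r_j.m) \geq \bigwedge_{m'} \mu_A(r_j.m')$) before taking the infimum, so that the resulting bound survives passage to $\mu_{\bar{A}}(r_1r_2)$. Once this is in place, the computations are entirely mechanical and mirror the style of Proposition~\ref{pr1.3}.
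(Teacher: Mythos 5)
Your proposal is correct and follows essentially the same route as the paper: a direct verification of the two ideal axioms, obtaining the pointwise inequalities $\mu_A((r_1-r_2).m)\geq \mu_A(r_1.m)\wedge\mu_A(r_2.m)$ and $\mu_A(r_1r_2.m)\geq \mu_A(r_j.m)$ from the submodule axioms and then passing to $\bigwedge_{m}$ (resp.\ $\bigvee_{m}$). The only difference is cosmetic: you make explicit the $r=-1$ symmetry step and the commutativity $(r_1r_2).m=r_2.(r_1.m)$, which the paper uses silently.
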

\begin{proof}
Let $ r_1 , r_2 \in R $ then

$
\mu_{\bar{A}}(r_{1} - r_{2})= \bigwedge\limits_{m\in M} \mu_{A}((r_{1} - r_{2}).m)
=\bigwedge\limits_{m\in M} \mu_A(r_{1}.m - r_{2}.m)
\geq \bigwedge\limits_{m\in M}(\mu_A(r_{1}.m) \wedge \mu_A(r_{2}.m))
=(\bigwedge\limits_{m\in M}\mu_A(r_{1}.m))\wedge (\bigwedge\limits_{m\in M}\mu_A(r_{2}.m))
=\mu_{\bar{A}}(r_{1})\wedge \mu_{\bar{A}}(r_{2}).
$
It is clear that $ \nu_{\bar{A}}(r_{1} - r_{2})\leq\nu_{\bar{A}}(r_{1})\vee \mu_{\bar{A}}(r_{2})$. Since the proof is the dual of the proof of the first part, hence we got that
$\mu_{\bar{A}}(r_{1} - r_{2})\geq\mu_{\bar{A}}(r_{1})\wedge \mu_{\bar{A}}(r_{2})$\ , \ $ \nu_{\bar{A}}(r_{1} - r_{2})\leq\nu_{\bar{A}}(r_{1})\vee \mu_{\bar{A}}(r_{2})$. Now suppose that $ r_1 , r_2 \in R $. Then  we have  
$ \mu_{\bar{A}}(r_{1}r_{2})=\bigwedge\limits_{m\in M} \mu_{A}(r_{1}r_{2}.m)\geq \bigwedge\limits_{m\in M} \mu_{A}(r_{1}.m)=\mu_{\bar{A}}(r_{1})$, similarly $ \nu_{\bar{A}}(r_{1}r_{2})\geq \nu_{\bar{A}}(r_{2}) $. Now $ \mu_{\bar{A}}(r_{1}r_{2})\geq \mu_{\bar{A}}(r_{1})\vee \mu_{\bar{A}}(r_{2}) $\ and 
also \ $ \nu_{\bar{A}}(r_{1}r_{2})\leq \nu_{\bar{A}}(r_{1})\wedge \nu_{\bar{A}}(r_{2}) $. Therefore $ \bar{A} $ is an intuitionistic fuzzy ideal of $ R $.
\end{proof}
\begin{remark}\label{re1.3}
If $ A \in IFM(M) $ then its radical is equal to the radical of  $ \bar{A} $, because:
\begin{equation*}
\begin{array}{ll}
 \sqrt{\bar{A}}&=\lbrace r,\sqrt{\mu_{\bar{A}}}(r),\sqrt{\nu_{\bar{A}}}(r)\rbrace\\
 &=\lbrace r,\bigvee\limits_{n\in \mathbb{N}}\mu_{\bar{A}}(r^n),\bigwedge\limits_{n\in \mathbb{N}}\nu_{\bar{A}}(r^n)\rbrace\qquad $(Since\ $\bar{A}$\  is\ an\  intuitionistic\ fuzzy\ ideal\  of\  $R$ )$\\
 &=\lbrace r,\bigvee\limits_{n\in \mathbb{N}}\bigwedge\limits_{m\in M}\mu_A({r^n}.m), \bigwedge\limits_{n\in \mathbb{N}}\bigvee\limits_{m\in M}\nu_A({r^n}.m)\rbrace\\
 &=\lbrace r,\sqrt{\mu_{A}}(r),\sqrt{\nu_{A}}(r)\rbrace =\sqrt{A}. 
\end{array}
\end{equation*}
\end{remark}
\begin{proposition}
Let $M$ be an  R-module and $ A\in IFM(M) $. Then $  M ^{(\alpha,\beta)}_{\bar{A}}= \overline{M_{A}^{(\alpha,\beta)}}  $, where
 $ M ^{(\alpha,\beta)}_{A} = \lbrace x \in M : \mu_A (x )\geq \alpha{ }\   and\   \nu_A (x ) \leq \beta \rbrace$.
\end{proposition}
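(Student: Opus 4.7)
The plan is to prove the equality by establishing both inclusions directly from the definitions, essentially unpacking what each side means and using the characterization of infima/suprema of real-valued families.

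First I would rewrite the right-hand side explicitly: $\overline{M_A^{(\alpha,\beta)}} = (M_A^{(\alpha,\beta)} : M) = \{r \in R \mid rM \subseteq M_A^{(\alpha,\beta)}\}$, i.e., the set of $r$ such that for every $m \in M$ one has $\mu_A(rm) \geq \alpha$ and $\nu_A(rm) \leq \beta$. The left-hand side, by the definition of $\bar{A}$, is $\{r \in R \mid \bigwedge_{m\in M}\mu_A(rm) \geq \alpha \text{ and } \bigvee_{m\in M}\nu_A(rm) \leq \beta\}$.

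For the inclusion $M^{(\alpha,\beta)}_{\bar{A}} \subseteq \overline{M_A^{(\alpha,\beta)}}$, I would take $r$ with $\mu_{\bar{A}}(r) \geq \alpha$ and $\nu_{\bar{A}}(r) \leq \beta$. Then for any particular $m \in M$, $\mu_A(rm) \geq \bigwedge_{m'\in M}\mu_A(rm') = \mu_{\bar A}(r) \geq \alpha$, and dually $\nu_A(rm) \leq \bigvee_{m'\in M}\nu_A(rm') = \nu_{\bar A}(r) \leq \beta$, so $rm \in M_A^{(\alpha,\beta)}$ for every $m$; hence $rM \subseteq M_A^{(\alpha,\beta)}$.

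Conversely, for $\overline{M_A^{(\alpha,\beta)}} \subseteq M^{(\alpha,\beta)}_{\bar{A}}$, suppose $rM \subseteq M_A^{(\alpha,\beta)}$. Then $\mu_A(rm) \geq \alpha$ and $\nu_A(rm) \leq \beta$ for all $m$, so taking infimum and supremum respectively gives $\mu_{\bar A}(r) = \bigwedge_{m}\mu_A(rm) \geq \alpha$ and $\nu_{\bar A}(r) = \bigvee_{m}\nu_A(rm) \leq \beta$, whence $r \in M^{(\alpha,\beta)}_{\bar A}$. There is no substantive obstacle here: the proof is a direct unpacking of definitions, and the only thing being used beyond that is the elementary equivalence ``$\inf S \geq \alpha \iff s \geq \alpha$ for all $s \in S$'' together with its dual for $\sup$. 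No sup property hypothesis on $A$ is needed, in contrast with Proposition \ref{pr3.3}.
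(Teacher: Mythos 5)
Your proof is correct and is essentially the paper's own argument: the paper proves the same chain of identifications (routing briefly through $\mathrm{Ann}(M/M_{A}^{(\alpha,\beta)})$) as a single string of set equalities, where you phrase it as two inclusions, but the substance in both is unpacking $(M_{A}^{(\alpha,\beta)}:M)$ as $\lbrace r\in R \mid rm\in M_{A}^{(\alpha,\beta)}\ \text{for all}\ m\in M\rbrace$ and then using the equivalences $\bigwedge_{m}\mu_A(rm)\geq\alpha \iff \mu_A(rm)\geq\alpha$ for all $m$, and dually for $\bigvee$ and $\beta$. Your closing observation that no sup property is needed, unlike in Proposition \ref{pr3.3}, matches the paper, which likewise imposes no such hypothesis here.
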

\begin{proof}
\begin{equation*}
\begin{array}{ll}
 \overline{M_{A}^{(\alpha,\beta)}} &=(M ^{(\alpha,\beta)}_{A}:M)\\
 &= Ann\frac{M}{M ^{(\alpha,\beta)}_{A}}\\
 &=\lbrace r\in R\  \vert \ r.(m+M ^{(\alpha,\beta)}_{A})= M ^{(\alpha,\beta)}_{A} \quad for\ all\ m\in M \rbrace\\
 &= \lbrace r\in R\  \vert\ r.m+M ^{(\alpha,\beta)}_{A}=M ^{(\alpha,\beta)}_{A} \quad for\ all\ m\in M \rbrace\\
 & = \lbrace r\in R\ \vert\  r.m\in M ^{(\alpha,\beta)}_{A} \quad for\ all\ m\in M \rbrace\\
 & =\lbrace r\in R\ \vert\ \mu_{A}(r.m)\geq \alpha  , \nu_{A}(r.m)\leq \beta  \quad for\ all\ m\in M \rbrace\\
 &= \lbrace r\in R\ \vert\ \bigwedge\limits_{m\in M}\mu_{A}(r.m)\geq \alpha  , \bigvee\limits_{m\in M}\nu_{A}(r.m)\leq \beta \rbrace\\
 & = \lbrace r\in R\ \vert\  \mu_{\bar{A}}(r)\geq \alpha  , \nu_{\bar{A}}(r)\leq \beta \rbrace\\
 & = M ^{(\alpha,\beta)}_{\bar{A}}    
\end{array}
\end{equation*}
\end{proof}
\begin{definition} \cite{18}
Let M and N be modules over the same ring R and let $f$ be a mapping from M to N. Let $ A = (\mu_A, \nu_A)$ be an intuitionistic fuzzy submodule of $ M$ and $B = (\mu_B,\nu_B)$ be an intuitionistic fuzzy submodule of $N$. Then the image of intuitionistic fuzzy submodule $ A $ of $M$, $f(A) = (f(\mu_A),\nu_{A}(f)) \in N$ can be defined for all $y \in N$ as follows:
\begin{footnotesize}
\begin{equation*}
f(\mu_A)(y)=\begin{cases}
\vee\lbrace \mu_{A}(x)\ \vert \ x\in R \ , f(x)=y\rbrace
& \text{if $f^{-1}(y)\neq \emptyset $} \\
0 & \text{otherwise}
\end{cases} \\,
\nu_{A}(f)=\begin{cases}
\wedge\lbrace \nu_{A}(x)\ \vert \ x\in R \ , f(x)=y\rbrace
& \text{if $f^{-1}(y)\neq \emptyset $} \\
1 & \text{otherwise}
\end{cases} 
\end{equation*}
\end{footnotesize}
and inverse image of intuitionistic fuzzy submodule $ B $ of $N$ can be defined as $x \in M, f^{-1}(B) = B(f(x))$.
\end{definition}
\begin{theorem}\label{th1.3} 

Let $ f:M\longrightarrow M' $ be an epimorphism of  R-modules. If $ A \in IFM(M) $ then $ \bar{A}\subseteq \overline{f(A)} $.

\end{theorem}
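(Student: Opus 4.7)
The plan is to unpack the definitions of $\bar{A}$ and $\overline{f(A)}$ and exploit the surjectivity of $f$. Unfolding what $\bar{A} \subseteq \overline{f(A)}$ means according to \eqref{eq1}, I must verify, for every $r \in R$, that $\mu_{\bar{A}}(r) \leq \mu_{\overline{f(A)}}(r)$ and $\nu_{\bar{A}}(r) \geq \nu_{\overline{f(A)}}(r)$, where by definition
\[
\mu_{\overline{f(A)}}(r) = \bigwedge_{y \in M'} f(\mu_A)(r.y), \qquad \nu_{\overline{f(A)}}(r) = \bigvee_{y \in M'} \nu_A(f)(r.y).
\]

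My first step is to fix $r \in R$ and $y \in M'$ and use the surjectivity of $f$ to produce some $x \in M$ with $f(x) = y$. Because $f$ is $R$-linear, $f(r.x) = r.f(x) = r.y$, so $r.x$ lies in the preimage $f^{-1}(r.y)$ that appears in the definition of $f(\mu_A)(r.y)$. This single observation is really the heart of the argument: it supplies the chain
\[
f(\mu_A)(r.y) = \bigvee \{\mu_A(z) \mid f(z)=r.y\} \;\geq\; \mu_A(r.x) \;\geq\; \bigwedge_{m \in M}\mu_A(r.m) \;=\; \mu_{\bar{A}}(r),
\]
in which the final bound is independent of $y$. Taking the infimum over $y \in M'$ then yields $\mu_{\overline{f(A)}}(r) \geq \mu_{\bar{A}}(r)$.

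The non-membership estimate is entirely dual: for the same $x$ I have $\nu_A(f)(r.y) = \bigwedge\{\nu_A(z)\mid f(z)=r.y\} \leq \nu_A(r.x) \leq \bigvee_{m \in M}\nu_A(r.m) = \nu_{\bar{A}}(r)$, and taking the supremum over $y$ produces $\nu_{\overline{f(A)}}(r) \leq \nu_{\bar{A}}(r)$. Combining the two inequalities completes the verification via \eqref{eq1}.

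I do not expect a real obstacle here: surjectivity gives the needed preimage for free, and the rest is unwinding definitions. The only bookkeeping concern is that intuitionistic-fuzzy containment flips direction on the $\nu$-coordinate, so the two inequalities I need point opposite ways; I just have to be careful not to mix up the $\bigvee / \bigwedge$ appearing in the definitions of $f(\mu_A)$, $\nu_A(f)$, $\bar{A}$, and $\overline{f(A)}$.
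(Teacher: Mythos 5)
Your proof is correct and follows essentially the same route as the paper's own computation: both arguments rest on the single observation that each $y\in M'$ has a preimage $x$ with $r.x\in f^{-1}(r.y)$, so the supremum defining $f(\mu_A)(r.y)$ is bounded below by $\mu_{\bar{A}}(r)$, with the dual estimate for $\nu_A(f)$. The only difference is that the paper first verifies $f(A)\in IFM(M')$ before applying the bar operation (its definition of $\bar{A}$ is stated for intuitionistic fuzzy submodules); since the $\bigwedge/\bigvee$ formula for the bar makes sense for any intuitionistic fuzzy subset and your inequality chain never uses the submodule property, omitting that verification is harmless for the containment itself.
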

\begin{proof}
First, we show that $ f(A) $ be an intuitionistic fuzzy submodule of $ M' $. Let $ u,v\in R $, then there exists $ x,y \in R $ such that $ f(x)=u,f(y)=v $  hence $ f(x-y)=u-v $. Now we have:

1)$ f(\mu_A)(u-v)=\bigvee\limits_{z\in f^{-1}(u-v)}\mu_A(z)
\geq \bigvee\limits_{x-y\in f^{-1}(u-v)}\mu_A(x-y)
\geq \bigvee\limits_{\substack{x\in f^{-1}(u) \\ y\in f^{-1}(v)}}(\mu_A(x)\wedge \mu_A(y))\\
=(\bigvee\limits_{x\in f^{-1}(u)}\mu_A(x))\wedge (\bigvee\limits_{y\in f^{-1}(v)}\mu_A(y))
=f(\mu_A)(u)\wedge f(\mu_A)(v)$.\\ 

2)$f(\mu_A)(r.m)=\bigvee\limits_{m'\in f^{-1}(r.m)}\mu_A(m')
\geq \bigvee\limits_{m'\in f^{-1}(m)}\mu_A(r.m')
\geq \bigvee\limits_{m'\in f^{-1}(m)}\mu_A(m')
=f(\mu_A)(m)$.\\

Similarly, $\nu_A(f)(u-v)=\nu_A(f)(u)\wedge \nu_A(f)(v)\ and \; \nu_A(f)(r.m)=\nu_A(f)(m) $.\\

3)$ f(\mu_A)(0_{M'})=\bigvee\limits_{m\in f^{-1}(0_{M'})}\mu_A(m)=\mu_A(0_M)$\,,\,$\nu_A(f)(0_{M'})=\bigwedge\limits_{m\in f^{-1}(0_{M'})}\nu_A(m)=\nu_A(0_M) $.\\

Thus we prove that $ f(A)\in IFM(M') $.
Let $ r\in R$, then\\ 
\begin{footnotesize}
$\overline{f(\mu_{A})}(r)= (f(\mu_A):f(M))(r)=\bigwedge\limits_{m'\in f(M)}f(\mu_A)(r.m')=\bigwedge\limits_{m'\in f(M)}\ \bigvee\limits_{m\in f^{-1}(r.m')}\mu_A(m)\geq \bigwedge\limits_{m'\in f(M)}\ \bigvee\limits_{r.m\in f^{-1}(r.m')}\mu_A(r.m)\\
(Sice\  m\in f^{-1}(m')\  then\;  r.m\in f^{-1}(r.m'))\geq \bigwedge\limits_{\substack{m'\in f(M) \\ m\in f^{-1}(m')}}\mu_{A}(r.m)=\bigwedge\limits_{\substack{ m'\in f(M) \\ m\in \cup f^{-1}(m')}}\mu_{A}(r.m)=\bigwedge\limits_{\substack{ m'\in f(M) \\ m\in \cup f^{-1}(m')}}\mu_{A}(r.m)=\bigwedge\limits_{m\in M}\mu_{A}(r.m)=\mu_{\bar{A}}(r)$
\end{footnotesize}

 It is clear that $\overline{\nu_{A}(f)}(r)=\nu_{\bar{A}}(r) $.
Therefore $ \bar{A}\subseteq \overline{f(A)}=(\overline{f(\mu_{A})},\overline{\nu_{A}(f)}) $
\end{proof}
\begin{theorem}\label{th2.3} 
Let $ f:M\longrightarrow M' $ be a homomorphism of R-modules. If $ B\in IFM(M') $, then $ \overline{f^{-1}(B)}\supseteq \bar{B} $. Equality holds if\ $ f $\ is an epimorphism.
\end{theorem}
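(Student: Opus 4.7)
The plan is to reduce everything to comparing infima and suprema of $\mu_B$ and $\nu_B$ evaluated on two index sets, namely $\{r\cdot f(m) : m\in M\}$ and $\{r\cdot m' : m' \in M'\}$, and then observe that the former is always contained in the latter, with equality precisely when $f$ is surjective.

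First I would unpack the definitions. Recalling that the inverse image satisfies $\mu_{f^{-1}(B)}(x) = \mu_B(f(x))$ and $\nu_{f^{-1}(B)}(x) = \nu_B(f(x))$ for all $x \in M$, and that $f$ is an $R$-module homomorphism so $f(rm) = r\cdot f(m)$, one computes for any $r \in R$:
\begin{equation*}
\mu_{\overline{f^{-1}(B)}}(r) = \bigwedge_{m\in M} \mu_{f^{-1}(B)}(r.m) = \bigwedge_{m\in M} \mu_B(r\cdot f(m)),
\end{equation*}
and dually $\nu_{\overline{f^{-1}(B)}}(r) = \bigvee_{m\in M} \nu_B(r\cdot f(m))$. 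On the other hand, by definition, $\mu_{\bar B}(r) = \bigwedge_{m'\in M'}\mu_B(rm')$ and $\nu_{\bar B}(r) = \bigvee_{m'\in M'}\nu_B(rm')$.

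Next I would invoke the elementary observation that $\{\,r\cdot f(m) : m\in M\,\} \subseteq \{\,r\cdot m' : m'\in M'\,\}$, because $f(M) \subseteq M'$. Taking an infimum over a smaller index set can only produce a larger or equal value, giving $\mu_{\overline{f^{-1}(B)}}(r) \geq \mu_{\bar B}(r)$. Symmetrically, taking a supremum over a smaller index set produces a smaller or equal value, giving $\nu_{\overline{f^{-1}(B)}}(r) \leq \nu_{\bar B}(r)$. By the definition of inclusion of intuitionistic fuzzy sets in \eqref{eq1}, this is exactly $\overline{f^{-1}(B)} \supseteq \bar B$.

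For the equality statement, assume $f$ is an epimorphism. Then $f(M) = M'$, so the two index sets above coincide, and both inequalities become equalities. This yields $\overline{f^{-1}(B)} = \bar B$. I do not anticipate any real obstacle: the only subtlety is keeping straight that ``$\supseteq$'' for intuitionistic fuzzy sets reverses the direction on the non-membership component, and one must use $f(rm)=r\cdot f(m)$ at the crucial step to pull the scalar $r$ outside the homomorphism. Everything else is bookkeeping with the definitions of $\overline{(\,\cdot\,)}$ and of the inverse image.
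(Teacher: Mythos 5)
Your proposal is correct and follows essentially the same route as the paper's own proof: both unpack $\mu_{f^{-1}(B)}(r.m)=\mu_B(f(r.m))=\mu_B(r\cdot f(m))$ and then compare the infimum (resp.\ supremum) over the index set $\{r\cdot f(m): m\in M\}$ with that over $\{r\cdot m': m'\in M'\}$, using $f(M)\subseteq M'$, with equality of the two index sets when $f$ is an epimorphism. No gaps; your handling of the reversed inequality on the non-membership component matches the paper's convention in \eqref{eq1}.
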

\begin{proof}
Let$ r\in R$, then

$\overline{f^{-1}(\mu_{B})}(r)=(f^{-1}(\mu_B):f^{-1}(M'))(r)
= \bigwedge\limits_{m\in f^{-1}(M') }f^{-1}(\mu_{B})(r.m)
=\bigwedge\limits_{m\in f^{-1}(M') }\mu_{B}(f(r.m))\\
=\bigwedge\limits_{m\in f^{-1}(M') }\mu_{B}(rf(m))
=\bigwedge\limits_{f(m)\in M'}\mu_{B}(r.f(m))
\geq \bigwedge\limits_{m'\in M'}\mu_{B}(r.m'))
=\mu_{\bar{B}}(r)$.
 Similar to the previous part, we can show that $\overline{f^{-1}(\nu_B)}(r)\leq \nu_{\bar{B}}(r)$.
Therefore  $ \overline{f^{-1}(B)}\supseteq \bar{B}$.
Furthermore if $ f $ is an epeimorphism then $ {f^{-1}}(\bar{B})= \bar{B}\ $.
\end{proof}
\begin{proposition}
Let $ A,B \in IFM(M)$. Then the following hold

1. $\bar{\bar{A}}=\bar{A}$

2. $ A\subseteq B \Longrightarrow \bar{A}\subseteq \bar{B} $

3. $ \overline{A\cap B}= \bar{A}\cap \bar{B} $

4. $ \overline{A+B}\supseteq \bar{A}+\bar{B} $

\end{proposition}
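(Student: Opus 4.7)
The plan is to obtain all four statements by direct, lattice-theoretic manipulations of the suprema and infima appearing in the definition of $\bar{A}$, together with elementary facts about the operations $\cap$, $+$ on intuitionistic fuzzy sets. Throughout, I will work with the $\mu$-component and note that the $\nu$-component follows by the dual argument, as has been done repeatedly already in the excerpt.

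For part (2), from $A\subseteq B$ I have $\mu_A(rm)\leq\mu_B(rm)$ and $\nu_A(rm)\geq\nu_B(rm)$ for every $m\in M$; taking $\bigwedge_m$ (resp.\ $\bigvee_m$) preserves these inequalities and yields $\bar{A}\subseteq\bar{B}$. For part (3), using \eqref{eq3} and the distributivity $\bigwedge_m(\mu_A(rm)\wedge \mu_B(rm)) = (\bigwedge_m\mu_A(rm))\wedge(\bigwedge_m\mu_B(rm))$, together with its dual for $\vee$, the claim reduces to expanding the definitions. For part (1), I first interpret $\bar{\bar{A}}$ as the colon of $\bar{A}$ viewed as an intuitionistic fuzzy submodule of the $R$-module $R$; unraveling gives
\[
\mu_{\bar{\bar{A}}}(r)=\bigwedge_{s\in R}\bigwedge_{m\in M}\mu_A(rsm).
\]
The inequality $\mu_{\bar{\bar{A}}}(r)\leq\mu_{\bar{A}}(r)$ is obtained by specializing $s=1$, and the reverse inequality follows because, for each $s\in R$ and $m\in M$, the element $sm$ lies in $M$, so $\mu_A(rsm)\geq\mu_{\bar{A}}(r)$, and this lower bound survives the inf.

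For part (4), the key observation is that if $r=r_1+r_2$ in $R$, then for every $m\in M$ the decomposition $rm=r_1m+r_2m$ is available, so by the definition of the sum of intuitionistic fuzzy submodules $\mu_{A+B}(rm)\geq\mu_A(r_1m)\wedge\mu_B(r_2m)$. Taking $\bigwedge_m$ on both sides, and using the distributivity noted above, gives $\mu_{\overline{A+B}}(r)\geq\mu_{\bar{A}}(r_1)\wedge\mu_{\bar{B}}(r_2)$; finally, taking the supremum over all decompositions $r=r_1+r_2$ yields $\mu_{\overline{A+B}}(r)\geq\mu_{\bar{A}+\bar{B}}(r)$, and the dual chain handles $\nu$. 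The main obstacle, rather than technical difficulty, will be bookkeeping: keeping the direction of inequalities straight between the $\mu$ and $\nu$ components, and making the identification of $\bar{A}$ with an intuitionistic fuzzy submodule of $R$ precise in part (1) so that $\bar{\bar{A}}$ is well-defined.
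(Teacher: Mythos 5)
Your proposal is correct in all four parts, and for (1)--(3) it follows essentially the paper's own route: the paper likewise proves (2) by applying $\bigwedge_m$ (resp.\ $\bigvee_m$) to the pointwise inequalities, proves (3) by the same interchange $\bigwedge_m(\mu_A(rm)\wedge\mu_B(rm))=(\bigwedge_m\mu_A(rm))\wedge(\bigwedge_m\mu_B(rm))$ and its dual, and in (1) also regards $\bar{A}$ as an intuitionistic fuzzy submodule of the $R$-module $R$, writing $\mu_{\bar{\bar{A}}}(r)=\bigwedge_{r'\in R}\mu_{\bar{A}}(rr')=\mu_{\bar{A}}(r)$ with the terse justification ``since $1\in R$''; your two-sided version (specialize $s=1$ for one direction, use $sm\in M$ so $\mu_A(rsm)\geq\mu_{\bar{A}}(r)$ for the other) spells out exactly what that equality needs, and is arguably cleaner since it does not invoke the ideal property of $\bar{A}$. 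The genuine divergence is in part (4). The paper never looks at decompositions of $r$: it reuses the containments $A\subseteq A+B$ and $B\subseteq A+B$ (established earlier via $\mu_B(\theta)=1$, $\nu_B(\theta)=0$), applies the monotonicity of part (2) to get $\bar{A}\subseteq\overline{A+B}$ and $\bar{B}\subseteq\overline{A+B}$, and then concludes $\bar{A}+\bar{B}\subseteq\overline{A+B}$ --- a step that implicitly uses that $\overline{A+B}$ is an intuitionistic fuzzy ideal of $R$, so that it absorbs the fuzzy sum via $\mu_C(y+z)\geq\mu_C(y)\wedge\mu_C(z)$ and its dual. Your argument instead is a direct pointwise computation: from $rm=r_1m+r_2m$ and the definition of the sum you get $\mu_{\overline{A+B}}(r)\geq\mu_{\bar{A}}(r_1)\wedge\mu_{\bar{B}}(r_2)$ for every decomposition $r=r_1+r_2$, and then you take the supremum over decompositions. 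Your route is self-contained --- it needs neither the earlier proposition that colon ideals are intuitionistic fuzzy ideals nor the containments $A,B\subseteq A+B$ --- and it delivers the explicit inequality $\mu_{\overline{A+B}}(r)\geq\bigvee_{r_1+r_2=r}\bigl(\mu_{\bar{A}}(r_1)\wedge\mu_{\bar{B}}(r_2)\bigr)$ directly, at the cost of a little more bookkeeping; the paper's route is shorter because it leans on structure already proved.
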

\begin{proof}
Let $ r\in R$, then

1.Since we can consider $ \bar{A} $ as $ R-module $, then
$ \mu_{\bar{\bar{A}}}(r)=\bigwedge\limits_{r'\in R} \mu_{\bar{A}}(r.r')=\mu_{\bar{A}}(r) $\ ,\ and also \ 
$ \nu_{\bar{\bar{A}}}(r)=\bigvee\limits_{r'\in R} \nu_{\bar{A}}(r.r')=\nu_{\bar{A}}(r)$ (Since\ $ 1\in R ) $.
Hence $ \bar{\bar{A}}= (\mu_{\bar{\bar{A}}} , \nu_{\bar{\bar{A}}})=(\mu_{\bar{A}} , \mu_{\bar{A}})= \bar{A} $

2.Since $ A\subseteq B $ then $\mu_A \subseteq \mu_B $ and $\nu_A \supseteq \nu_B $, hence
$ \mu_{\bar{A}}(r)= \bigwedge\limits_{m\in M} \mu_A(r.m)\leq \bigwedge\limits_{m\in M} \mu_B(r.m)=\mu_{\bar{B}}(r)\Longrightarrow \mu_{\bar{A}}\subseteq \mu_{\bar{B}} $, and
$ \nu_{\bar{A}}(r)= \bigvee\limits_{m\in M} \nu_A(r.m)\geq \bigvee\limits_{m\in M} \nu_B(r.m)=\nu_{\bar{B}}\Longrightarrow \nu_{\bar{A}}\supseteq \nu_{\bar{B}}(r)$. Therefore $ \bar{A}\subseteq \bar{B}$.

3.We know that $ A\cap B=(\mu_{A \cap B} , \nu_{A \cup B}) $, so
 $\mu_{\overline{A \cap B}}(r)=\bigwedge\limits_{m\in M}(\mu_{A \cap B})(rm)
= \bigwedge\limits_{m\in M}(\mu_{A}(rm) \wedge \mu_{B}(rm))
=(\bigwedge\limits_{m\in M}\mu_{A}(rm))\wedge(\bigwedge\limits_{m\in M}\mu_{B}(rm))
=\mu_{\bar{A}}(r)\wedge \mu_{\bar{B}}(r)
= (\mu_{\bar{A}}\cap \mu_{\bar{B}})(r)$.
Similarly, $\nu_{\overline{A \cup B}}(r)= (\nu_{\bar{A}}\cup \nu_{\bar{B}})(r)$ and so we get $ \overline{A \cap B}= \bar{A}\cap \bar{B}$.

4.It is clear that $ \mu_A \subseteq \mu_A + \mu_B $ , $ \mu_B \subseteq \mu_A + \mu_B $\ and \ $ \nu_A \supseteq \nu_A + \nu_B $ , $ \nu_B \supseteq \nu_A + \nu_B$. So 
$ \mu_{\bar{A}} \subseteq \mu_{\overline{A +B}}\ ,\  \mu_{\bar{B}} \subseteq \mu_{\overline{A +B}}$. Hence
 $\mu_{\bar{A}}+\mu_{\bar{B}}\subseteq \mu_{\overline{A +B}}  $\ and \ $ \nu_{\bar{A}} \supseteq \nu_{\overline{A +B}}\ ,\  \nu_{\bar{B}} \supseteq \nu_{\overline{A +B}}$ and so
 $\nu_{\bar{A}}+\nu_{\bar{B}}\supseteq \nu_{\overline{A +B}} $.
Therefore $ \overline{A +B}\supseteq \bar{A}+\bar{B} $
\end{proof}
In this part of the paper we define intuitionistic fuzzy primary submodule.
\begin{definition}
Let $ A $ be an intuitionistic fuzzy submodule of R-module $ M $, then $ A $ is called intuitionistic fuzzy primary submodule of $ M $, if for $ r\in R , m\in M $
\begin{equation}
\mu_A(r.m)=\mu_A(m)\ ,\ \nu_A(r.m)=\nu_A(m)\quad or \quad \mu_A(r.m)\leq \sqrt{\mu_A}(r)\  ,\ \nu_A(r.m)\geq \sqrt{\nu_A}(r)
\end{equation}
\end{definition}
\begin{example}
consider $ \mathbb{Z}_4 $ as a $ \mathbb{Z}-module $. Define $ A=(\mu_A , \nu_A) $ by

$
\mu_{A}(z)=\begin{cases}
1 & \text{if\ $z= \bar{0}$}\\
0 & \text{if\ $z= \bar{1}$}\\
\frac{1}{2} & \text{if\ $ z=\bar{2}$}\\
0 & \text{if\ $z= \bar{3}$}
\end{cases}\quad  ,\qquad
\nu_{A}(z)=\begin{cases}
0 & \text{if\ $z= \bar{0}$}\\
1 & \text{if\ $z= \bar{1}$}\\
\frac{1}{2} & \text{if\ $ z=\bar{2}$}\\
1 & \text{if\ $z= \bar{3}$}
\end{cases}
$

It is clear that $ A $ is an intuitionstic fuzzy submodule of $ \mathbb{Z}_4$. Since 
$\sqrt{\mu_A}(z)=\bigvee\limits_{n\in \mathbb{N}}\bigwedge\limits_{m\in \mathbb{Z}_4}\mu_A (z^{n}m)$ and $\sqrt{\nu_A}(z)=\bigwedge\limits_{n\in \mathbb{N}}\bigvee\limits_{m\in \mathbb{Z}_4}\nu_A (z^{n}m)$, then 

$\sqrt{\mu_{A}}(z)=\begin{cases}
1 & \text{if\ $z= 0$}\\
1 & \text{if\ $z\in <2>$}\\
0 & \text{if\ $z\notin <2>$}
\end{cases}\quad  ,\qquad
\sqrt{\nu_{A}}(z)=\begin{cases}
0 & \text{if\ $z= 0$}\\
0 & \text{if\ $z\in <2>$}\\
1 & \text{if\ $z\notin <2>$}
\end{cases}
$,

for all $ z\in \mathbb{Z}$. Thus $ A=(\mu_A , \nu_A)$ is an intuitionstic fuzzy primary submodule of $ \mathbb{Z}$.
\end{example}
Now we would like to investigate the relationship between intuitionistic fuzzy primary and primary submodules of a module.
\begin{proposition}
Let $ A $ be a submodule of $ M $. Then its intuitionistic fuzzy characteristic function $ \chi_A $ is an intuitionistic fuzzy primary submodule if and only if $ A $ is a primary submodule of $ M $.\
\end{proposition}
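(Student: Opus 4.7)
The plan is to exploit the fact that both $\chi_A$ and $\sqrt{\chi_A}$ take values only in the two points $(1,0)$ and $(0,1)$, so the intuitionistic fuzzy primary condition reduces to a finite case analysis. The key algebraic input is Proposition~\ref{pr2.3}, which identifies $\sqrt{\chi_A}$ with $\chi_{\sqrt{A}}$, where $\sqrt{A} = \{\,r \in R : r^n M \subseteq A \text{ for some } n \in \mathbb{N}\,\} = \sqrt{(A:M)}$. This is what lets me translate between the crisp primary condition ($rm\in A$ and $m\notin A$ force $r\in\sqrt{(A:M)}$) and the intuitionistic fuzzy one.

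For the ``only if'' direction, I would assume $\chi_A$ is intuitionistic fuzzy primary and pick $r\in R$, $m\in M$ with $rm \in A$ and $m\notin A$. Then $\mu_{\chi_A}(rm)=1$ while $\mu_{\chi_A}(m)=0$, so the first alternative in the definition fails. Hence the second alternative must hold, giving $1 = \mu_{\chi_A}(rm) \leq \sqrt{\mu_{\chi_A}}(r)$, which forces $\sqrt{\mu_{\chi_A}}(r)=1$. By Proposition~\ref{pr2.3}, $\sqrt{\mu_{\chi_A}} = \mu_{\chi_{\sqrt A}}$, so $r\in\sqrt{A}$, i.e.\ $r^n M\subseteq A$ for some $n$. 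This is precisely the defining property of a primary submodule.

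For the ``if'' direction, assume $A$ is primary and fix arbitrary $r\in R$, $m\in M$. I split into three cases depending on the membership of $m$ and $rm$ in $A$. If $m\in A$, then $rm\in A$ too (since $A$ is a submodule), so $\chi_A(rm)=(1,0)=\chi_A(m)$, verifying the first alternative. If $m\notin A$ and $rm\notin A$, then $\chi_A(rm)=(0,1)=\chi_A(m)$, again the first alternative. The only interesting case is $m\notin A$ and $rm\in A$: the primariness of $A$ yields $r\in\sqrt{(A:M)}=\sqrt{A}$, hence by Proposition~\ref{pr2.3} $\sqrt{\chi_A}(r)=\chi_{\sqrt A}(r)=(1,0)$, which makes the second alternative $\mu_{\chi_A}(rm)=1\leq 1=\sqrt{\mu_{\chi_A}}(r)$ and $\nu_{\chi_A}(rm)=0\geq 0=\sqrt{\nu_{\chi_A}}(r)$ hold. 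Since $m\in A$ together with $rm\notin A$ is impossible, the three cases are exhaustive.

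I do not expect a real obstacle here: the entire argument is a bookkeeping exercise once one observes that the values of $\chi_A$ and $\sqrt{\chi_A}$ are $\{0,1\}$-valued, so the two inequalities in the definition of intuitionistic fuzzy primary collapse into equalities. The only step that requires prior work is the crisp/fuzzy bridge provided by Proposition~\ref{pr2.3}; everything else is a direct verification.
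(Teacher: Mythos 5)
Your proposal is correct and takes essentially the same route as the paper's proof: both directions rest on the bridge $\sqrt{\chi_A}=\chi_{\sqrt{A}}$ from Proposition~\ref{pr2.3} plus a finite case analysis on the $\{0,1\}$-valued membership data, with your split by membership of $m$ (including the observation that $m\in A$, $rm\notin A$ is impossible) being only a cosmetic reorganization of the paper's split on the value of $\chi_A(rm)$.
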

\begin{proof}
Let $ A $ be a primary submodule of $ M $ and $ r\in R, m\in M $. By \ref{eq5} we know that $ \chi_A =(\mu_{\chi_A} , \nu_{\chi_A})$ where 
\begin{center}
$
\mu_{\chi_A}(x)=\begin{cases}
1 & \text{if\ $x\in A$}\\
0 & \text{if\ $x\notin A$}
\end{cases}\quad  ,\qquad
\nu_{\chi_A}(x)=\begin{cases}
0 & \text{if \ $x\in A$}\\
1 & \text{if\ $x\notin A$}
\end{cases}
$
\end{center} 
If $ \chi_A(rm)=(1,0) $ then $ rm\in A $. Since $ A $ is a primary submodule, then $ m\in A  $ or $ r\in \sqrt{A} $. Now we consider two cases:

$ \textit{Case 1}{}: $ $ m\in A $ then $ \chi_A(m)=(1,0) $ and hence $ \chi_A(rm)=\chi_A(m) $. 

$ \textit{Case 2}{}: $ $ r\in \sqrt{A} $\ then $ \chi_{\sqrt{A}}=(1,0) $, but by Proposition \ref{pr2.3} $ ,  \sqrt{\chi_A}=\chi_{\sqrt{A}} $\  hence $ \sqrt{\chi_A}=(1,0) $, therefore $ \chi_A(rm)=\sqrt{\chi_A}(r) $.

If $ \chi_A(rm)=(0,1) $ then $ rm\notin A $ and so $ m\notin A $, hence $ \chi_A(m)=(0,1) $. Thus $ \chi_A(rm)=\chi_A(m) $ or $ \chi_A(rm)\leq \sqrt{\chi_A}(r) $.

Conversely, let $ \chi_A $ be an intuitionistic fuzzy primary submodule of $ M $ and suppose that $ rm\in A $, $ r\in R , m\in M $. Then $ \chi_{A}(rm)=~(1,0) $. Now since $ \chi_A $ is an intuitionistic fuzzy primary submodule then
 \begin{center}
 $\chi_A(rm)=\chi_A(m) $\quad or\quad $ \chi_A(rm)\leq \sqrt{\chi_A}(r)=\chi_{\sqrt{A}}(r) $  
 \end{center}
 If $\chi_A(rm)=\chi_A(m) $ then $ \chi_{A}(m)=(1,0) $ and so $ m\in A $. 
 But If $ \chi_A(rm)\leq \chi_{\sqrt{A}}(r) $, then
\begin{equation*}
\begin{array}{ll}
\mu_{\chi_{A}}(rm)\leq \mu_{\chi_{\sqrt{A}}}(r)\ ,\ \nu_{\chi_{A}}(rm)\geq \nu_{\chi_{\sqrt{A}}}(r)&\Longrightarrow \mu_{\chi_{\sqrt{A}}}(r)\geq 1 \ ,\ \nu_{\chi_{\sqrt{A}}}(r)\leq 0\\
&\Longrightarrow \mu_{\chi_{\sqrt{A}}}(r)=1 \ , \ \nu_{\chi_{\sqrt{A}}}(r)=0\\
&\Longrightarrow \chi_{\sqrt{A}}=(1,0)\\
& \Longrightarrow r\in \sqrt{A}
\end{array}
\end{equation*}
\end{proof}
\begin{proposition}
Let $ M $ be an R-module and $ A $ be an  intuitionistic fuzzy primary submodule of  $  M $ with  sup property. Then $ M_{A}^{(\alpha,\beta)} $ is a primary submodule of $ M $.
\end{proposition}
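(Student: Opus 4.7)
The plan is to verify the defining property of a primary submodule of $M$ directly from the intuitionistic fuzzy primary hypothesis on $A$, leveraging the cut-set identity established in Proposition \ref{pr3.3}. First I would check that $M_A^{(\alpha,\beta)}$ is indeed a submodule: since $\mu_A(\theta)=1\geq\alpha$, $\nu_A(\theta)=0\leq\beta$, and since the intuitionistic fuzzy submodule axioms on $A$ force $\mu_A(x+y)\geq\mu_A(x)\wedge\mu_A(y)$, $\nu_A(x+y)\leq\nu_A(x)\vee\nu_A(y)$, and $\mu_A(rx)\geq\mu_A(x)$, $\nu_A(rx)\leq\nu_A(x)$, the set $M_A^{(\alpha,\beta)}$ is closed under addition and scalar multiplication.

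Next I would take arbitrary $r\in R$ and $m\in M$ with $rm\in M_A^{(\alpha,\beta)}$, so that $\mu_A(rm)\geq\alpha$ and $\nu_A(rm)\leq\beta$. By the definition of intuitionistic fuzzy primary submodule, one of two alternatives holds. In Case 1, $\mu_A(rm)=\mu_A(m)$ and $\nu_A(rm)=\nu_A(m)$, which immediately yields $\mu_A(m)\geq\alpha$ and $\nu_A(m)\leq\beta$, i.e.\ $m\in M_A^{(\alpha,\beta)}$. In Case 2, $\mu_A(rm)\leq\sqrt{\mu_A}(r)$ and $\nu_A(rm)\geq\sqrt{\nu_A}(r)$, so $\sqrt{\mu_A}(r)\geq\alpha$ and $\sqrt{\nu_A}(r)\leq\beta$. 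This places $r$ in $(\sqrt{M_A})^{(\alpha,\beta)}$, and by Proposition \ref{pr3.3} (here the \emph{sup property} of $A$ is essential) this cut-set equals $\sqrt{M_A^{(\alpha,\beta)}}$. Hence there exists $n\in\mathbb{N}$ with $r^n M\subseteq M_A^{(\alpha,\beta)}$, which is exactly the statement $r\in\sqrt{(M_A^{(\alpha,\beta)}:M)}$.

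Combining the two cases gives the primary submodule condition: $rm\in M_A^{(\alpha,\beta)}$ implies $m\in M_A^{(\alpha,\beta)}$ or $r\in\sqrt{(M_A^{(\alpha,\beta)}:M)}$. The main obstacle, if any, is not in the case analysis itself, which is essentially a translation of the fuzzy condition into its crisp counterpart; rather, it is in ensuring the cut-set identity from Proposition \ref{pr3.3} is legitimately applicable, which is precisely what the sup property hypothesis guarantees. One should also note that $(\alpha,\beta)\in\mathrm{Im}\,\mu_A\times\mathrm{Im}\,\nu_A$ is what keeps $M_A^{(\alpha,\beta)}$ from being all of $M$ in the generic case, so the resulting primary submodule is proper.
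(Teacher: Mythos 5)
Your proof is correct and follows essentially the same route as the paper's: the identical two-case analysis of the intuitionistic fuzzy primary condition applied to $rm\in M_{A}^{(\alpha,\beta)}$, with Proposition \ref{pr3.3} (justified by the sup property) converting $r\in(\sqrt{M_A})^{(\alpha,\beta)}$ into $r\in\sqrt{M_{A}^{(\alpha,\beta)}}$. Your explicit verification that $M_{A}^{(\alpha,\beta)}$ is a submodule and your closing remark on properness are minor elaborations of points the paper leaves implicit.
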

\begin{proof}
Since $ A\in IFM(M)$, so $ M_{A}^{(\alpha,\beta)} $ is a submodule of $ M $. Now let $ r\in R , m\in M $ and $ rm\in M_{A}^{(\alpha,\beta)}  $, then $ \mu_A (rm)\geq \alpha\ ,\ \nu_A (rm)\leq \beta $. Since $ A $ is an intuitionistic fuzzy primary submodule of R-module $ M $ then
$ \alpha\leq \mu_A (rm)= \mu_A(m), \ \beta\geq \nu_A (rm)= \nu_A(m)$ \, or \,
 $ \alpha\leq \mu_A (rm)\leq \sqrt{\mu_A}(r), \ \beta\geq \nu_A (rm)\geq~ \sqrt{\nu_A}(r) $.
Hence
$ \mu_A(m)\geq \alpha \ , \ \nu_A(m)\leq \beta$ \, or \, $\sqrt{\mu_A}(r)\geq \alpha \ , \ \sqrt{\nu_A}(r)\leq \beta $. This mean that $ m\in M_{A}^{(\alpha,\beta)} $\ or \ $ r\in~ (\sqrt{M_A})^{(\alpha ,\beta)} $. But $ A $ has $ sup \ property$ and by Proposition~ \ref{pr3.3} we have $ (\sqrt{M_A})^{(\alpha,\beta)}=\sqrt{M_{A}^{(\alpha,\beta)}} $, therefore $ m\in M_{A}^{(\alpha,\beta)} $  or  $ r\in \sqrt{M_{A}^{(\alpha,\beta)}} $, and it follows that $ M_{A}^{(\alpha,\beta)} $ is a primary submodule of $ M $.
\end{proof}
If $ M_{A}^{(\alpha,\beta)} $ is strict level cut or $ A $ is finite valued intuitionstic fuzzy submodule, then above proposition holds without assumption of sup property.
We know that $ A $ is finite valued intuitionstic fuzzy submodule of $ M $, if 
$ Im \mu=\lbrace \mu(x) : x\in M \rbrace $ , $ Im \nu=\lbrace \nu(x) : x\in M \rbrace $
 be finite.
\begin{proposition}
Let $A$ be an intuitionstic fuzzy submodule of $ M $, such that every level cut of $ A $ is primary submodule of $ M $, then $ A $ is an intuitionistic fuzzy primary submodule of $ M $.
\end{proposition}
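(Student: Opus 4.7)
The plan is to unwind the definition of intuitionistic fuzzy primary submodule by choosing, for each pair $(r,m)\in R\times M$, the level cut at exactly the membership values of $r.m$, and then dispatch the two alternatives using the assumed primality of that cut.

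First I would fix arbitrary $r\in R$ and $m\in M$ and set
\begin{equation*}
\alpha=\mu_A(r.m),\qquad \beta=\nu_A(r.m).
\end{equation*}
Since $A\in IFS(M)$, we have $\alpha+\beta\leq 1$ and $(\alpha,\beta)\in \operatorname{Im}\mu_A\times\operatorname{Im}\nu_A$, so $M_A^{(\alpha,\beta)}$ is a bona fide level cut. Tautologically $r.m\in M_A^{(\alpha,\beta)}$. By hypothesis $M_A^{(\alpha,\beta)}$ is a primary submodule of $M$, so either $m\in M_A^{(\alpha,\beta)}$, or $r\in \sqrt{M_A^{(\alpha,\beta)}}$, i.e.\ there exists $n\in\mathbb{N}$ with $r^{n}M\subseteq M_A^{(\alpha,\beta)}$.

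In the first case, $m\in M_A^{(\alpha,\beta)}$ gives $\mu_A(m)\geq\alpha=\mu_A(r.m)$ and $\nu_A(m)\leq\beta=\nu_A(r.m)$. Because $A$ is an intuitionistic fuzzy submodule, we also have the reverse inequalities $\mu_A(r.m)\geq\mu_A(m)$ and $\nu_A(r.m)\leq\nu_A(m)$ from condition (3) of the definition of $IFM(M)$. Combining these gives $\mu_A(r.m)=\mu_A(m)$ and $\nu_A(r.m)=\nu_A(m)$, which is the first alternative in the definition of intuitionistic fuzzy primary submodule.

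In the second case, $r^{n}M\subseteq M_A^{(\alpha,\beta)}$ means $\mu_A(r^{n}.m')\geq\alpha$ and $\nu_A(r^{n}.m')\leq\beta$ for every $m'\in M$. Taking infima/suprema over $m'$ yields $\bigwedge_{m'\in M}\mu_A(r^{n}.m')\geq\alpha$ and $\bigvee_{m'\in M}\nu_A(r^{n}.m')\leq\beta$, and then taking the appropriate join/meet over $n\in\mathbb{N}$ gives
\begin{equation*}
\sqrt{\mu_A}(r)\geq\alpha=\mu_A(r.m),\qquad \sqrt{\nu_A}(r)\leq\beta=\nu_A(r.m),
\end{equation*}
which is precisely the second alternative. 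Since $r$ and $m$ were arbitrary, $A$ is an intuitionistic fuzzy primary submodule. The only delicate point, which I would flag explicitly, is that in the first case one must invoke the submodule axiom to promote the inequality $\mu_A(m)\geq\mu_A(r.m)$ (coming from the level-cut membership) to the equality required by the definition; everything else is bookkeeping against the formulas for $\sqrt{\mu_A}$ and $\sqrt{\nu_A}$.
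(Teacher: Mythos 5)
Your proposal is correct and follows essentially the same route as the paper's own proof: both fix $r,m$, take the level cut at exactly $\alpha=\mu_A(r.m)$, $\beta=\nu_A(r.m)$, apply primality of $M_A^{(\alpha,\beta)}$ to split into two cases, and then either combine the cut membership with the submodule axiom to get the equalities, or pass from $r^{n}M\subseteq M_A^{(\alpha,\beta)}$ through the meet over $m$ and join over $n$ to reach $\sqrt{\mu_A}(r)\geq\alpha$ and $\sqrt{\nu_A}(r)\leq\beta$. The delicate point you flag is precisely the paper's chain $\alpha=\mu_A(r.m)\geq\mu_A(m)\geq\alpha$, so nothing is missing.
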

\begin{proof}
 Let $ r\in R , m\in M $, and let $ \mu_A(r.m)=\alpha \ , \ \nu_A(r.m)=\beta $, then
\begin{footnotesize}
\begin{align*}
& r.m \in M_{A}^{(\alpha,\beta)} \Longrightarrow m\in M_{A}^{(\alpha,\beta)}\quad or \quad r\in \sqrt{M_{A}^{(\alpha,\beta)}} \Longrightarrow m\in M_{A}^{(\alpha,\beta)}\quad or \quad r^{n}.M \subseteq  M_{A}^{(\alpha,\beta)}\quad for\ some\ n\in \mathbb{N}\\
&\Longrightarrow \mu_A (m)\geq \alpha \ , \ \nu_A (m)\leq \beta \quad or \quad \mu_A (r^{n}.m)\geq \alpha  \ , \ \nu_A (r^{n}.m)\leq \beta \quad for\ all\ m\in M \\
 &\Longrightarrow \alpha=\mu_A (r.m)\geq \mu_A (m)\geq\alpha\ ,\ \beta=\nu_A (r.m)\leq \nu_A (m)\leq\beta\quad or \quad
\bigwedge \limits_{m\in M}\mu_A (r^{n}.m)\geq \alpha \ , \ \bigvee \limits_{m\in M}\nu_A (r^{n}.m)\leq \beta \\
 &\Longrightarrow \mu_A (r.m)= \mu_A (m)\ , \ \nu_A (r.m)= \nu_A (m)\quad or \quad 
\bigvee\limits_{n\in \mathbb{N}}\bigwedge\limits_{m\in M} {\mu_A (r^{n}.m)} \geq \alpha \ , \ \bigwedge\limits_{n\in \mathbb{N}}\bigvee\limits_{m\in M} {\nu_A (r^{n}.m)} \leq \beta \\
 &\Longrightarrow \mu_A (r.m)= \mu_A (m)\ , \ \nu_A (r.m)= \nu_A (m)\quad or \quad \sqrt{\mu_A}(r)\geq\alpha \ , \ \sqrt{\nu_A}(r)\leq \beta \\
 &\Longrightarrow \mu_A (r.m)= \mu_A (m)\ , \ \nu_A (r.m)= \nu_A (m)\quad or \quad \mu_A (r.m)\leq\sqrt{\mu_A}(r) \ , \ \nu_A (r.m)\geq\sqrt{\nu_A}(r)\ . 
\end{align*}
\end{footnotesize}
 This mean that $ A $ is an intuitionistic fuzzy primary submodule of $ M $.
\end{proof}
\begin{definition}
Let $ A $ be an intuitionistic fuzzy submodule of R-module $ M $. The support of $ A $ is defined as $ A^{*}=\lbrace x\in M \vert \mu_A(x)>0 , \nu_A (x)<1\rbrace $.
\end{definition}
In the following we investigated related between intuitionistic fuzzy submodule $ A $ and support of $ A $.
\begin{proposition}
Let $ A $ be an intuitionistic fuzzy primary submodule of R-module $ M $. Then $A^{*}$ is a primary submodule of $ M $.
\end{proposition}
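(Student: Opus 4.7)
The plan is to verify directly that $A^*$ is a submodule of $M$ and that it satisfies the primary condition. The submodule part only uses the inequalities built into the definition of intuitionistic fuzzy submodule; the primary part requires a careful extraction argument in the non-trivial case.

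First I would show $A^*$ is a submodule. Since $\mu_A(\theta)=1>0$ and $\nu_A(\theta)=0<1$, we have $\theta\in A^*$. For $x,y\in A^*$ and $r\in R$, the inequalities $\mu_A(x-y)\geq \mu_A(x)\wedge\mu_A(y)>0$, $\nu_A(x-y)\leq \nu_A(x)\vee\nu_A(y)<1$, together with $\mu_A(rx)\geq\mu_A(x)>0$ and $\nu_A(rx)\leq\nu_A(x)<1$, give closure under subtraction and scalar multiplication.

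Next, suppose $rm\in A^*$. The intuitionistic fuzzy primary hypothesis splits into two alternatives. If $\mu_A(rm)=\mu_A(m)$ and $\nu_A(rm)=\nu_A(m)$, then $\mu_A(m)>0$ and $\nu_A(m)<1$, so $m\in A^*$ and we are done. Otherwise $\mu_A(rm)\leq\sqrt{\mu_A}(r)$ and $\nu_A(rm)\geq\sqrt{\nu_A}(r)$, which forces $\sqrt{\mu_A}(r)>0$ and $\sqrt{\nu_A}(r)<1$. Unpacking $\sqrt{\mu_A}(r)=\bigvee_{n\in\mathbb{N}}\bigwedge_{m'\in M}\mu_A(r^{n}\cdot m')>0$, some index $n_1$ must satisfy $\bigwedge_{m'\in M}\mu_A(r^{n_1}\cdot m')>0$, since if every such infimum were zero the supremum would vanish; symmetrically $\sqrt{\nu_A}(r)<1$ yields an $n_2$ with $\bigvee_{m'\in M}\nu_A(r^{n_2}\cdot m')<1$. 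Taking $n=\max(n_1,n_2)$ and applying the monotonicity $\mu_A(s\cdot x)\geq\mu_A(x)$ and $\nu_A(s\cdot x)\leq\nu_A(x)$ with $s=r^{n-n_1}$ (resp.\ $s=r^{n-n_2}$) and $x=r^{n_1}m'$ (resp.\ $x=r^{n_2}m'$), one obtains $\mu_A(r^{n}\cdot m')>0$ and $\nu_A(r^{n}\cdot m')<1$ for every $m'\in M$. Hence $r^{n}M\subseteq A^*$, i.e., $r\in\sqrt{(A^*:M)}$, as required.

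The main obstacle is precisely this second case: one must replace the infinite supremum/infimum in the formulas for $\sqrt{\mu_A}(r)$ and $\sqrt{\nu_A}(r)$ by a single exponent $n$ that controls both $\mu_A(r^{n}m')$ and $\nu_A(r^{n}m')$ uniformly over all $m'\in M$. The monotonicity of $\mu_A,\nu_A$ under scalar multiplication is what allows promoting the two separate witnessing exponents $n_1,n_2$ to a common one; after that step, the extraction reduces to the elementary fact that a positive supremum (respectively a sub-unit infimum) of nonnegative (respectively sub-unit) terms must be witnessed by some individual term, and that an infimum being positive forces every member of the family to be positive.
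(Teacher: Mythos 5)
Your proof is correct and follows essentially the same route as the paper: split on the two alternatives of the intuitionistic fuzzy primary condition, obtaining $m\in A^{*}$ in the equality case and $r\in\sqrt{(A^{*}:M)}$ in the inequality case. You are in fact more careful than the paper, which merely asserts the passage from $\sqrt{\mu_A}(r)>0$ and $\sqrt{\nu_A}(r)<1$ to $r\in\sqrt{A^{*}}$, whereas you justify it by extracting witnessing exponents $n_1,n_2$ and promoting them to a common exponent $n$ via the monotonicity $\mu_A(s\cdot x)\geq\mu_A(x)$ and $\nu_A(s\cdot x)\leq\nu_A(x)$, and you also verify that $A^{*}$ is a submodule, a step the paper omits.
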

\begin{proof}
Suppose that $ rm\in A^{*} $ for $ r\in R , m\in M $. Then $ \mu_A(x)>0 , \nu_A (x)<1 $. Since $A$ is an intuitionistic fuzzy primary submodule, then

$ \mu_A (rm)=\mu_A(m)\,$or$\, \mu_A(rm)\leq \sqrt{\mu_A}(r)$ , $ \nu_A (rm)=\nu_A(m)\,$or$\, \nu_A(rm)\geq \sqrt{\nu_A}(r)$, thus $ \mu_A(m)>0 \, $or$ \, \sqrt{\mu_A}(r)>0 , \nu_A(m)<1 \,$or$\, \sqrt{\nu_A}(r)<1 $.Then $ m\in A^{*} \,$or$\, r\in \sqrt{A^{*}}$. Therefore we prove that $ A^{*} $ is a primary submodule of $ M $.
\end{proof}
\begin{remark}
The support of an intuitionistic fuzzy submodule may be a primary submodule, but it need not be an intuitionistic fuzzy primary submodule. So the converse of the above proposition need not be true. Note  the following example.
\end{remark}
\begin{example}
Let $R=M=\mathbb{Z}$. Define an intuitionistic fuzzy subset $ A=(\mu_A, \nu_A) $ in $ \mathbb{Z} $ by

$
\mu_A(x)=\begin{cases}
1 & \text{if\ $x\in  6\mathbb{Z} $}\\
0.3 & \text{if\ $x\in  2\mathbb{Z}-6\mathbb{Z} $}\\
0 & \text{if\ $x\in  \mathbb{Z}-2\mathbb{Z} $}
\end{cases}\quad  ,\qquad
\nu_A(x)=\begin{cases}
0 & \text{if\ $x\in  6\mathbb{Z} $}\\
0.5 & \text{if\ $x\in 2\mathbb{Z}-6\mathbb{Z}$}\\
1 & \text{if\ $x\in \mathbb{Z}-2\mathbb{Z}$}
\end{cases}
$

It is clear that $ A $ is an intuitionistic fuzzy submodule of $ \mathbb{Z}$ and $ A^{*}=2\mathbb{Z}$,  which is a primary submodule of $ \mathbb{Z} $. we show that $ A $ is not an intuitionistic fuzzy primary submodule of $ \mathbb{Z} $.
Consider $ r=2 , m=3 $. Then $ \mu_A(rm)=\mu_A(2.3)=\mu_A(6)=1\neq \mu_A(3)=0 \, , \, \nu_A(rm)=\nu_A(2.3)=\nu_A(6)=0\neq \nu_A(3)=1 $ and $ \mu_A(rm)=\mu_A(2.3)>\sqrt{\mu_A}(2)=\bigvee\limits_{n\in \mathbb{N}}\bigwedge\limits_{m\in \mathbb{Z}} {\mu_A (2^{n}.m)}=0.3 \, , \, \nu_A(rm)=\nu_A(2.3)<\sqrt{\nu_A}(2)=\bigwedge\limits_{n\in \mathbb{N}}\bigvee\limits_{m\in \mathbb{Z}} {\nu_A (2^{n}.m)}=0.5 $. Therefore $ A $ is not an intuitionistic fuzzy primary submodule of $ \mathbb{Z} $.
\end{example}

\begin{definition} \cite{5}
An intuitionistic fuzzy ideal $ A $ is said to be intuitionistic fuzzy weakly primary ideal if for any $ x,y \in R $
\begin{equation} \label{eq6}
\mu_A(x.y)=\mu_A(x) \ , \ \nu_A(x.y)=\nu_A(x)\quad or \quad \mu_A(x.y)\leq \mu_A(y^{n})\ , \ \nu_A(x.y)\geq \nu_A(y^{n})
\end{equation}
for some $n\in \mathbb{N} $
\end{definition}
\begin{proposition}
Let $ A $ be an intuitionistic fuzzy primary submodule of $ M $ with $ sup\ property$, then $ \bar{A} $
is an intuitionistic fuzzy weakly primary ideal of $ R $ and $ \sqrt{A} $ is the intuitionistic fuzzy prime ideal of $ R $.
\end{proposition}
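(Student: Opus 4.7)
The plan is to establish the two claims in sequence: first that $\bar A$ is an intuitionistic fuzzy weakly primary ideal of $R$, and then to deduce that $\sqrt{A}$ is prime by invoking Remark \ref{re1.3} (which identifies $\sqrt{A}$ with $\sqrt{\bar A}$) and bootstrapping from the weakly primary property just obtained.

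For the weakly primary claim, I would fix $x,y\in R$ and, for each $m\in M$, apply the intuitionistic fuzzy primary hypothesis of $A$ to the pair $(r,m')=(y,xm)\in R\times M$. This yields, pointwise in $m$, the dichotomy that either
\begin{equation*}
\mu_A(xym)=\mu_A(xm)\ \text{and}\ \nu_A(xym)=\nu_A(xm), \quad\text{or}\quad \mu_A(xym)\le\sqrt{\mu_A}(y)\ \text{and}\ \nu_A(xym)\ge\sqrt{\nu_A}(y).
\end{equation*}
If the first alternative holds for every $m\in M$, then taking $\bigwedge_m$ of the $\mu$-equation and $\bigvee_m$ of the $\nu$-equation immediately gives $\mu_{\bar A}(xy)=\mu_{\bar A}(x)$ and $\nu_{\bar A}(xy)=\nu_{\bar A}(x)$, which is the equality branch of the weakly primary definition. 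Otherwise there is some $m_0$ at which the first alternative fails and hence the second must hold, giving $\mu_{\bar A}(xy)\le\mu_A(xym_0)\le\sqrt{\mu_A}(y)$ and $\nu_{\bar A}(xy)\ge\nu_A(xym_0)\ge\sqrt{\nu_A}(y)$. The sup property then produces $n_1,n_2\in\mathbb{N}$ with $\mu_{\bar A}(y^{n_1})=\sqrt{\mu_A}(y)$ and $\nu_{\bar A}(y^{n_2})=\sqrt{\nu_A}(y)$. Since axiom 3 of submodules makes $k\mapsto\mu_{\bar A}(y^k)$ non-decreasing and $k\mapsto\nu_{\bar A}(y^k)$ non-increasing, the choice $n=\max(n_1,n_2)$ preserves both extrema simultaneously, so $\mu_{\bar A}(xy)\le\mu_{\bar A}(y^n)$ and $\nu_{\bar A}(xy)\ge\nu_{\bar A}(y^n)$ -- the inequality branch.

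For the primeness of $\sqrt{A}$: it is already an intuitionistic fuzzy ideal of $R$ by Proposition \ref{pr1.3}, and equals $\sqrt{\bar A}$ by Remark \ref{re1.3}. Fix $x,y\in R$; for each $k\in\mathbb{N}$ apply the now-established weakly primary property of $\bar A$ to the pair $(x^k,y^k)$. In either branch, using $\mu_{\bar A}(x^k)\le\sqrt{\mu_{\bar A}}(x)$ and $\mu_{\bar A}(y^{ks})\le\sqrt{\mu_{\bar A}}(y)$ (and dually for $\nu$), one obtains $\mu_{\bar A}(x^ky^k)\le\sqrt{\mu_{\bar A}}(x)\vee\sqrt{\mu_{\bar A}}(y)$ and $\nu_{\bar A}(x^ky^k)\ge\sqrt{\nu_{\bar A}}(x)\wedge\sqrt{\nu_{\bar A}}(y)$. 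Taking $\bigvee_k$ and $\bigwedge_k$ respectively produces $\sqrt{\mu_{\bar A}}(xy)\le\sqrt{\mu_{\bar A}}(x)\vee\sqrt{\mu_{\bar A}}(y)$ and $\sqrt{\nu_{\bar A}}(xy)\ge\sqrt{\nu_{\bar A}}(x)\wedge\sqrt{\nu_{\bar A}}(y)$, which together with the ideal inequalities $\ge$ and $\le$ in the reverse directions become equalities -- the prime-ideal characterisation.

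The main obstacle is the coupling of $\mu$ and $\nu$ in the inequality branch of the first part: the sup property a priori delivers only separate witnessing exponents $n_1$ and $n_2$ for the two components, while the weakly primary definition demands a single exponent $n$ that works for both. The resolution is the monotonicity of $k\mapsto\mu_{\bar A}(y^k)$ and $k\mapsto\nu_{\bar A}(y^k)$ coming from the submodule axioms, which guarantees that once $\sqrt{\mu_A}(y)$ (resp.\ $\sqrt{\nu_A}(y)$) is attained at some exponent it is retained for all larger exponents; this lets $n=\max(n_1,n_2)$ serve uniformly. Everything else in the argument is routine manipulation of suprema and infima.
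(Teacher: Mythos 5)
Your proof is correct, and it splits from the paper halfway through. For the first claim your route is essentially the paper's: both apply the primary dichotomy to $r_2$ acting on $r_1.m$ (your $(y,xm)$), then pass to $\bigwedge_m$ and $\bigvee_m$, then use the sup property plus monotonicity of $k\mapsto\mu_{\bar A}(y^k)$, $k\mapsto\nu_{\bar A}(y^k)$ to get a single exponent (your $n=\max(n_1,n_2)$ versus the paper's $n'=n_1+n_2$ --- the same idea). One point where you are actually more careful than the paper: the dichotomy is pointwise in $m$, and the paper pushes it through the infimum/supremum wholesale, whereas you justify the passage by the witness $m_0$ at which the equality branch fails; that is the right way to make the step airtight. (Both you and the paper invoke the sup property of $A$ to extract the exponents, although what is really being used is attainment of $\bigvee_n \mu_{\bar A}(y^n)$, i.e.\ a sup-property-type statement for $\bar A$ on $\{y^n\}$; note also that the paper's definition of sup property already provides a \emph{single} $y_0$ witnessing both the $\mu$-supremum and the $\nu$-infimum, so the two-exponent repair is not even needed --- harmless in both write-ups.) For the second claim you genuinely diverge: the paper gives no argument at all, merely noting $\sqrt{A}=\sqrt{\bar A}$ via Remark \ref{re1.3} and asserting that the radical of an intuitionistic fuzzy weakly primary ideal is prime. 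You actually prove it, applying the weakly primary property to the pairs $(x^k,y^k)$, bounding $\mu_{\bar A}(x^ky^k)\le\sqrt{\mu_{\bar A}}(x)\vee\sqrt{\mu_{\bar A}}(y)$ in both branches, taking $\bigvee_k$, and closing the gap with the reverse inequality already established in Proposition \ref{pr1.3}, which yields the pointwise prime characterisation $\sqrt{\mu_{\bar A}}(xy)=\sqrt{\mu_{\bar A}}(x)\vee\sqrt{\mu_{\bar A}}(y)$ and its dual for $\nu$. Since the paper never states its definition of intuitionistic fuzzy prime ideal, your proof supplies the missing lemma in the (weakly completely) prime sense consistent with the paper's other definitions; your version is therefore more complete than the original, at the cost of committing to that particular notion of primeness.
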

\begin{proof}
Let $ r_1 , r_2\in R $, then

$ \mu_{\bar{A}}(r_1 r_2)= \bigwedge\limits_{m\in M}\mu_A(r_1r_2.m)=\bigwedge\limits_{m\in M}\mu_A(r_1(r_2.m)) $ , $ \nu_{\bar{A}}(r_1 r_2)=\bigvee\limits_{m\in M}\nu_A(r_1r_2.m)=
\bigvee\limits_{m\in M}\nu_A(r_1(r_2.m)) $.
Now since $ A $ is an intuitionistic fuzzy primary submodule, then
\begin{align*}
&\mu_A(r_1(r_2.m))=\mu_A(r_1.m)\quad or \quad \mu_A(r_1(r_2.m))\leq \sqrt{\mu_A}(r_2) \Longrightarrow \\
&\Longrightarrow\mu_{\bar{A}}(r_1 r_2)= \bigwedge\limits_{m\in M}\mu_A(r_1.m)\quad or \quad \mu_{\bar{A}}(r_1 r_2)\leq \bigwedge\limits_{m\in M}\sqrt{\mu_A}(r_2)=\sqrt{\mu_A}(r_2)\\
&\Longrightarrow \mu_{\bar{A}}(r_1r_2)=\mu_{\bar{A}}(r_1)\quad or \quad \mu_{\bar{A}}(r_1r_2)\leq \sqrt{\mu_A}(r_2)=\bigvee\limits_{n\in \mathbb{N}}\bigwedge\limits_{m\in M}\mu_A(r_{2}^{n}.m )\\
&\Longrightarrow\mu_{\bar{A}}(r_1r_2)=\mu_{\bar{A}}(r_1)\quad or \quad \mu_{\bar{A}}(r_1r_2)\leq \bigwedge\limits_{m\in M}\mu_A(r_{2}^{n_2}.m )\\ 
&for\ some\  n_2 \in \mathbb{N} 
(Because\ A\ has\ sup\ property)\\
&\Longrightarrow\mu_{\bar{A}}(r_1r_2)=\mu_{\bar{A}}(r_1)\quad or \quad \mu_{\bar{A}}(r_1r_2)\leq \bigwedge\limits_{m\in M}\mu_A(r_{2}^{n_2}.m )\leq \bigwedge\limits_{m\in M}\mu_A(r_{2}^{n_1}r_{2}^{n_2}.m )\\ 
&for\ some\ {n_1 , n_2}\in\mathbb{N}
\end{align*}
and also
\begin{align*}
&\nu_A(r_1(r_2.m))=\nu_A(r_1.m)\quad or \quad \nu_A(r_1(r_2.m))\geq \sqrt{\nu_A}(r_2)\Longrightarrow\\
&\Longrightarrow\nu_{\bar{A}}(r_1 r_2)= \bigvee\limits_{m\in M}\nu_A(r_1.m)\quad or \quad \nu_{\bar{A}}(r_1 r_2)\geq \bigvee\limits_{m\in M}\sqrt{\nu_A}(r_2)=\sqrt{\nu_A}(r_2)\\
&\Longrightarrow \nu_{\bar{A}}(r_1r_2)=\nu_{\bar{A}}(r_1)\quad or \quad \nu_{\bar{A}}(r_1r_2)\geq \sqrt{\nu_A}(r_2)=\bigwedge\limits_{n\in \mathbb{N}}\bigvee\limits_{m\in M}\nu_A(r_{2}^{n}.m )\\
&\Longrightarrow\nu_{\bar{A}}(r_1r_2)=\nu_{\bar{A}}(r_1)\quad or \quad \nu_{\bar{A}}(r_1r_2)\geq \bigvee\limits_{m\in M}\nu_A(r_{2}^{n_1}.m )\\
 &for\ some\  n_1 \in \mathbb{N}\: (Because\ A\ has\ sup\ property)\\
&\Longrightarrow\nu_{\bar{A}}(r_1r_2)=\nu_{\bar{A}}(r_1)\quad or \quad \nu_{\bar{A}}(r_1r_2)\geq \bigvee\limits_{m\in M}\nu_A(r_{2}^{n_1}.m )\geq \bigvee\limits_{m\in M}\nu_A(r_{2}^{n_1}r_{2}^{n_2}.m )\\   
\end{align*}
for some $ n_1 , n_2\in\mathbb{N}$. If we consider  $n'=n_1+n_2$  then we have
 
$\mu_{\bar{A}}(r_1r_2)=\mu_{\bar{A}}(r_1)\ ,\ \nu_{\bar{A}}(r_1r_2)=\nu_{\bar{A}}(r_1) \quad or \quad  \mu_{\bar{A}}(r_1r_2)\leq \mu_{\bar{A}}(r_1^{n'})\ , \ \nu_{\bar{A}}(r_1r_2)\geq \nu_{\bar{A}}(r_1^{n'})$ .

This proves that $ \bar{A} $ is an intuitionistic fuzzy weakly primary ideal of $ R $.
Next, by remark \ref{re1.3} we have $ \sqrt{\bar{A}}=\sqrt{A} $ and since $ \sqrt{\bar{A}}$ is intuitionistic fuzzy  prime ideal then $\sqrt{A} $ is intuitionistic fuzzy  prime ideal of $R$.
\end{proof}
The opposite of the above proposition is true if $A$ is an ideal.
\begin{theorem}
Let $ A\in IFI(R)$ with $ sup \ property$. Then $ A $ is an intuitionistic fuzzy primary submodule if and only if $ A $ is an intuitionistic fuzzy weakly primary ideal of $ R $.
\end{theorem}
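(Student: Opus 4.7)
The plan is to prove the equivalence by first reducing to the observation that when $M = R$ and $A$ is an intuitionistic fuzzy ideal, the construction $\bar{A}$ coincides with $A$ itself, and then exploiting the previous proposition for one direction and the symmetry of multiplication in a commutative ring for the other.

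First I would verify that $\bar{A} = A$ when $A \in IFI(R)$. By definition, $\mu_{\bar{A}}(r) = \bigwedge_{m \in R} \mu_A(rm)$. Taking $m = 1$ gives $\mu_{\bar{A}}(r) \leq \mu_A(r)$. Conversely, since $A$ is an intuitionistic fuzzy ideal, $\mu_A(rm) \geq \mu_A(r) \vee \mu_A(m) \geq \mu_A(r)$ for every $m \in R$, so $\mu_{\bar{A}}(r) \geq \mu_A(r)$. The dual argument works for $\nu_{\bar{A}}$, giving $\bar{A} = A$. This is the key bookkeeping identification; once it is made, both directions become almost immediate.

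For the $(\Rightarrow)$ direction, assume $A$ is an intuitionistic fuzzy primary submodule of $R$ with sup property. The previous proposition in the paper asserts that $\bar{A}$ is then an intuitionistic fuzzy weakly primary ideal of $R$. Applying the identification $\bar{A} = A$ above, we conclude that $A$ itself is an intuitionistic fuzzy weakly primary ideal. This direction is essentially a quotation of the previous proposition, with no new work needed.

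For the $(\Leftarrow)$ direction, assume $A$ is an intuitionistic fuzzy weakly primary ideal. I would fix arbitrary $r \in R$ and $m \in M = R$ and apply the weakly primary condition to the pair $x := m$, $y := r$. This yields that either $\mu_A(mr) = \mu_A(m)$ and $\nu_A(mr) = \nu_A(m)$, or there exists $n \in \mathbb{N}$ with $\mu_A(mr) \leq \mu_A(r^n)$ and $\nu_A(mr) \geq \nu_A(r^n)$. In the first case, commutativity of $R$ gives the primary-submodule equality at $rm$ directly. In the second case, using the simplified form of the radical for ideals, namely $\sqrt{\mu_A}(r) = \bigvee_{k \in \mathbb{N}} \mu_A(r^k) \geq \mu_A(r^n)$ and $\sqrt{\nu_A}(r) = \bigwedge_{k \in \mathbb{N}} \nu_A(r^k) \leq \nu_A(r^n)$, we obtain $\mu_A(rm) \leq \sqrt{\mu_A}(r)$ and $\nu_A(rm) \geq \sqrt{\nu_A}(r)$, which is exactly the alternative condition in the definition of intuitionistic fuzzy primary submodule.

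There is no real obstacle: the only subtlety is recognizing that the ideal-versus-submodule asymmetry collapses once $M = R$, which is handled by the $\bar{A} = A$ observation. Note that the sup property is only needed for the forward direction (inherited from the previous proposition); the backward direction uses nothing beyond commutativity and the simplified radical formula for ideals.
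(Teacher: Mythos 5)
Your proof is correct, and its two halves relate differently to the paper's. The backward direction is essentially the paper's own argument: apply the weakly primary condition, then pass from $\mu_A(r^n)$ to $\sqrt{\mu_A}(r)=\bigvee_{k\in\mathbb{N}}\mu_A(r^k)$ and dually for $\nu_A$ (no sup property needed, exactly as you note). The forward direction, however, takes a genuinely different route. The paper re-derives weak primariness by hand: it applies the primary condition to get $\mu_A(r_1r_2)\leq\sqrt{\mu_A}(r_2)=\bigvee_{n}\mu_A(r_2^{n})$ and $\nu_A(r_1r_2)\geq\bigwedge_{n}\nu_A(r_2^{n})$, invokes the sup property to obtain \emph{attained} exponents $n_1$ (for $\mu$) and $n_2$ (for $\nu$), and then merges them into the single exponent $n'=n_1+n_2$ that the weakly primary definition demands, via the ideal monotonicity $\mu_A(r_2^{n_1})\leq\mu_A(r_2^{n_1}r_2^{n_2})$ and $\nu_A(r_2^{n_2})\geq\nu_A(r_2^{n_1}r_2^{n_2})$. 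You instead reduce to the preceding proposition ($\bar{A}$ is weakly primary when $A$ is primary with sup property) together with your identification $\bar{A}=A$ for $A\in IFI(R)$ --- a lemma the paper never states, though it is proved by the same $m=1$ trick the paper uses tacitly to justify $\sqrt{\mu_A}(r)=\bigvee_{n}\mu_A(r^{n})$ for ideals. Your route buys economy (the $n_1+n_2$ merging argument is not repeated; it is inherited from the proposition's proof) and makes the ideal-versus-submodule collapse explicit; its cost is dependence on the earlier proposition, whose proof for general $M$ has to juggle the meet over $m\in M$ against a per-$m$ case split --- a delicacy that disappears in your specialization $M=R$ precisely because $\bar{A}=A$ eliminates the $\bigwedge_{m\in M}$. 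If you wanted the forward direction self-contained, your $\bar{A}=A$ observation plus the paper's four-line exponent-merging computation would do it directly.
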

\begin{proof}
Suppose that $ A $ is an intuitionistic fuzzy primary submodule of $ R $, then clearly $ A $ is intuitionistic fuzzy primary ideal  of $ R $.
Now since $ A $ is an intuitionistic fuzzy primary submodule, then for $ r_1 , r_2 \in R $

$ \mu_A(r_1r_2)=\mu_A(r_1)\ , \ \nu_A(r_1r_2)=\nu_A(r_1) $\; or \; $\mu_A(r_1r_2)\leq \sqrt{\mu_A}(r_2)\ , \ \nu_A(r_1r_2)\geq \sqrt{\nu_A}(r_2)\Longrightarrow$
\begin{equation*}
\begin{array}{ll}
 \Longrightarrow\mu_A(r_1r_2)\leq \bigvee\limits_{n\in \mathbb{N}}\mu_A(r_2^n)\ , \ \nu_A(r_1r_2)\geq \bigwedge\limits_{n\in \mathbb{N}}\nu_A(r_2^n)\\
\Longrightarrow \mu_A(r_1r_2)\leq \mu_A(r_2^{n_1})\ ,\ \nu_A(r_1r_2)\geq \nu_A(r_2^{n_2})\, $for some$\  n_1 , n_2 \in \mathbb{N}\,
\textsc{($Since $ A $ has\  Sup\  property$)}\\
\Longrightarrow \mu_A(r_1r_2)\leq \mu_A(r_2^{n_1})\leq \mu_A(r_2^{n_1}r_2^{n_2})\ , \ \nu_A(r_1r_2)\geq \nu_A(r_2^{n_2})\geq \nu_A(r_2^{n_1}r_2^{n_2})\\
\Longrightarrow \mu_A(r_1r_2)\leq \mu_A(r_2^{n'})\ ,\ \nu_A(r_1r_2)\geq \nu_A(r_2^{n'})\qquad  ($consider$\ n_1+n_2=n')\ .   
\end{array}
\end{equation*}
Therefore we obtain that

$ \mu_A(r_1r_2)=\mu_A(r_1)\ , \ \nu_A(r_1r_2)=\nu_A(r_1) $\quad or \quad $ \mu_A(r_1r_2)\leq \mu_A(r_2^{n'})\ ,\ \nu_A(r_1r_2)\geq \nu_A(r_2^{n'}) $.

Hence $ A $ is an intuitionistic fuzzy weakly primary ideal of $ R $.

Conversely, let $ A $ be an intuitionistic fuzzy weakly primary ideal of $ R $, then

 $  \mu_A(r_1r_2)=\mu_A(r_1)\ , \ \nu_A(r_1r_2)=\nu_A(r_1) $\: or \: $ \mu_A(r_1r_2)\leq \mu_A(r_2^{n})\ ,\ \nu_A(r_1r_2)\geq \nu_A(r_2^{n}) $\ for some $n \in \mathbb{N}$.
 Therefore $  \mu_A(r_1r_2)=\mu_A(r_1)\ , \ \nu_A(r_1r_2)=\nu_A(r_1) $\ or \ $ \mu_A(r_1r_2)\leq \bigvee\limits_{n \in \mathbb{N}} \mu_A(r_2^{n})\ ,\ \nu_A(r_1r_2)\geq \bigwedge\limits_{n \in \mathbb{N}} \nu_A(r_2^{n})$.  
Thus $  \mu_A(r_1r_2)=\mu_A(r_1)\ , \ \nu_A(r_1r_2)=\nu_A(r_1) $\: or \: $ \mu_A(r_1r_2)\leq \sqrt{\mu_A}(r_2)\ ,\ \nu_A(r_1r_2)\geq \sqrt{\nu_A}(r_2) $ for all $ r_1 , r_2 \in R $.
Hence $ A $ is intuitionistic fuzzy primary submodule of $ R $.
\end{proof}
The above theorem shows that the concept of the intuitionistic fuzzy primary submodule is the generalization of the intuitionistic fuzzy primary ideal.
\begin{example}
Let $ \mathbb{Z}_{12} $ as $ \mathbb{Z}-module $. Define $ A=\lbrace m,\mu_A(m),\nu_A(m)\rbrace $ such that $ m\in \mathbb{Z}_{12} $. By 

\begin{small}
$
\begin{cases} 
&\mu_A(m)=1\\
&\nu_A(m)=0

\end{cases}
$
 \begin{scriptsize}
$ if\ m\in \lbrace \bar{0},\bar{4},\bar{8}\rbrace$
\end{scriptsize}   and 
$
\begin{cases}
&\mu_A(m)=\frac{1}{2}\\
&\nu_A(m)=\frac{1}{2}
\end{cases}
$
\begin{scriptsize}
$if\ m\in \lbrace \bar{2},\bar{6},\bar{10}\rbrace$
\end{scriptsize} and 
$
\begin{cases} 
&\mu_A(m)=0\\
&\nu_A(m)=1
\end{cases}
$
 \begin{scriptsize}
$if\ m\in~ \lbrace \bar{1},\bar{3},\bar{5},\bar{7},\bar{9},\bar{11}\rbrace$
\end{scriptsize} 
$
$
\end{small}
We get that $ A $ is an intuitionistic fuzzy submodule of $ \mathbb{Z}_{12} $.
Now $ \bar{A}=(x,\mu_{\bar{A}}(x),\nu_{\bar{A}}(x)) $ for $ x\in \mathbb{Z} $ defined by:\\

$
\begin{cases}
&\mu_{\bar{A}}(z)= \bigwedge\limits_{m\in \mathbb{Z}_{12}}\mu_A(z.m)=1\\
&\nu_{\bar{A}}(z)= \bigvee\limits_{m\in \mathbb{Z}_{12}}\nu_A(z.m)=0
\end{cases}
$
\quad  if $ z\in <\bar{4}> $ \\\\
 
$
\begin{cases}
&\mu_{\bar{A}}(z)= \bigwedge\limits_{m\in \mathbb{Z}_{12}}\mu_A(z.m)=\frac{1}{2}\\
&\nu_{\bar{A}}(z)= \bigvee\limits_{m\in \mathbb{Z}_{12}}\nu_A(z.m)=\frac{1}{2}
\end{cases}
$
\quad  if $ z\in <\bar{2}>-<\bar{4}> $ \\\\

$
\begin{cases}
&\mu_{\bar{A}}(z)= \bigwedge\limits_{m\in \mathbb{Z}_{12}}\mu_A(z.m)=0\\
&\nu_{\bar{A}}(z)= \bigvee\limits_{m\in \mathbb{Z}_{12}}\nu_A(z.m)=1
\end{cases}
$
\quad otherwise\\

Clearly $ \bar{A} $ is an intuitionistic fuzzy ideal of $ \mathbb{Z} $. Now we know that $ \sqrt{\bar{A}}=\sqrt{A} $ defined by  $ \sqrt{A}=(\sqrt{\mu_A},\sqrt{\nu_A}) $ and\\
 
 $
 \begin{cases}
 &\sqrt{\mu_A}(z)=1\\
 &\sqrt{\nu_A}(z)=0
 \end{cases}
 $
 \quad if $ z\in <\bar{2}> $ \qquad and \quad
 $
\begin{cases}
 &\sqrt{\mu_A}(z)=0\\
 &\sqrt{\nu_A}(z)=1
 \end{cases}
 $
 \quad otherwise.  \\
 
  It is easy to show that $ A(z.m)=A(m)\,or\, A(z.m)\leq \sqrt{A}(z) $ for all $ m\in {\mathbb{Z}_{12}} , z\in \mathbb{Z} $. Hence $ A $ is an intuitionistic fuzzy primary submodule of $ {\mathbb{Z}_{12}} $.
\end{example}
\begin{proposition}
Let $ f:M\longrightarrow M' $ be an epimorphism of R-module $ M $ to R-module $ M'$. If $ A\in IFM(M)$, then $ \sqrt{A}\subseteq \sqrt{f(A)} $. Equality  holds if $ A $ is constant on kernel 
$ f $.
\end{proposition}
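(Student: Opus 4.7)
The plan is to establish the inclusion $\sqrt{A}\subseteq\sqrt{f(A)}$ by a direct unwinding of the definition of $\sqrt{\cdot}$ together with surjectivity of $f$, and then upgrade to equality under the hypothesis that $A$ is constant on $\ker f$ by first showing this forces $\mu_A$ and $\nu_A$ to be constant on the fibres of $f$.

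For the inclusion, I would fix $r\in R$ and $n\in\mathbb{N}$, and for each $m'\in M'$ use surjectivity to pick some $m\in M$ with $f(m)=m'$, so that $r^{n}.m\in f^{-1}(r^{n}.m')$. Since $f(\mu_A)(r^{n}.m')$ is by definition a supremum over all preimages of $r^{n}.m'$,
\[
\bigwedge_{m\in M}\mu_A(r^{n}.m)\ \le\ \mu_A(r^{n}.m)\ \le\ f(\mu_A)(r^{n}.m'),
\]
and the left-hand side is independent of $m'$, so taking $\bigwedge$ over $m'\in M'$ and then $\bigvee$ over $n$ yields $\sqrt{\mu_A}(r)\le\sqrt{f(\mu_A)}(r)$. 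The dual chain of inequalities, with suprema and infima swapped and choosing the same preimage $r^{n}.m$, gives $\sqrt{\nu_A}(r)\ge\sqrt{\nu_A(f)}(r)$; together these mean $\sqrt{A}\subseteq\sqrt{f(A)}$.

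For the equality assertion, the key reduction is that whenever $A$ is constant on $\ker f$, the values of $\mu_A$ and $\nu_A$ depend only on the image under $f$. Indeed, if $f(x_1)=f(x_2)$ then $x_1-x_2\in\ker f$, whence $\mu_A(x_1-x_2)=\mu_A(\theta)=1$, and the submodule axiom gives $\mu_A(x_1)\ge\mu_A(x_2)\wedge 1=\mu_A(x_2)$; by symmetry $\mu_A(x_1)=\mu_A(x_2)$, and dually $\nu_A$ is constant on fibres. Consequently $f(\mu_A)(y)=\mu_A(x)$ for any single preimage $x$ of $y$, and similarly for $\nu_A(f)$. Applying this with $y=r^{n}.f(m)$ and preimage $r^{n}.m$ yields
\[
f(\mu_A)(r^{n}.f(m))=\mu_A(r^{n}.m)\qquad\text{for every }m\in M,
\]
so $\bigwedge_{m'\in M'}f(\mu_A)(r^{n}.m')\le\mu_A(r^{n}.m)$ for every $m$; infimising over $m$ and then taking $\bigvee_n$ gives $\sqrt{f(\mu_A)}(r)\le\sqrt{\mu_A}(r)$. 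Combined with the dual $\nu$-inequality and the first part, this yields $\sqrt{A}=\sqrt{f(A)}$.

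I expect the main obstacle to lie in the equality case: without fibre-constancy of $\mu_A$, the supremum inside $f(\mu_A)(r^{n}.m')$ can strictly exceed every single $\mu_A(r^{n}.m)$, and consequently $\bigwedge_{m'}f(\mu_A)(r^{n}.m')$ could be strictly larger than $\bigwedge_{m\in M}\mu_A(r^{n}.m)$. Once the kernel-constancy hypothesis is leveraged, via the submodule axioms applied to the difference of two preimages, to collapse that supremum to a single value, the remaining manipulations are purely formal reorderings of $\bigvee$ and $\bigwedge$.
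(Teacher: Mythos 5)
Your proposal is correct, but it takes a genuinely different route from the paper. The paper disposes of this proposition in two lines by reduction to earlier machinery: Theorem~\ref{th1.3} gives $\bar{A}\subseteq \overline{f(A)}$ for an epimorphism, Remark~\ref{re1.3} identifies $\sqrt{A}=\sqrt{\bar{A}}$ (and likewise $\sqrt{f(A)}=\sqrt{\overline{f(A)}}$), and monotonicity of the radical then yields $\sqrt{A}\subseteq\sqrt{f(A)}$; for the equality case the paper simply \emph{asserts} that $\bar{A}=\overline{f(A)}$ when $A$ is constant on $\ker f$, with no argument. You instead work directly from the definition $\sqrt{\mu_A}(r)=\bigvee_{n}\bigwedge_{m}\mu_A(r^{n}.m)$, using surjectivity to select a preimage of each $r^{n}.m'$ for the inclusion, and your interchange of $\bigvee$ and $\bigwedge$ is sound in both the $\mu$ and $\nu$ components. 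More importantly, your fibre-constancy lemma --- from $f(x_1)=f(x_2)$ deduce $x_1-x_2\in\ker f$, hence $\mu_A(x_1-x_2)=\mu_A(\theta)=1$ and $\mu_A(x_1)\geq\mu_A(x_2)\wedge 1=\mu_A(x_2)$, with the dual for $\nu_A$ --- collapses the supremum defining $f(\mu_A)$ on each fibre to a single value, which is exactly the substance behind the paper's unproved claim $\bar{A}=\overline{f(A)}$. So your argument is longer but self-contained and fills a genuine gap the paper glosses over, whereas the paper's derivation buys brevity by reusing Theorem~\ref{th1.3} and the radical identity of Remark~\ref{re1.3}; your closing remark correctly diagnoses why kernel-constancy is the precise hypothesis needed, since otherwise the fibrewise supremum can strictly exceed every individual $\mu_A(r^{n}.m)$.
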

\begin{proof}
By Theorem \ref{th1.3} we have $ \bar{A}\subseteq \overline{f(A)} $, therefore $ \sqrt{\bar{A}}\subseteq \sqrt{\overline{f(A)}} $ and hence $ \sqrt{A}\subseteq \sqrt{f(A)} $. Also $ \bar{A}= \overline{f(A)} $ if $ A $ is constant on kernel $ f $, hence $ \sqrt{\bar{A}}= \sqrt{\overline{f(A)}} $ and therefore $ \sqrt{A}= \sqrt{f(A)} $.
\end{proof}
\begin{proposition}
Let $ f:M\longrightarrow M' $ be a homomorphism of R-module $ M $ to R-module $ M' $. If $ B\in IFM(M')$, then $ \sqrt{f^{-1}(B)}\supseteq \sqrt{B} $. Equality  holds if $ f $ is epimorphism.
\end{proposition}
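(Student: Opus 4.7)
The plan is to mirror the proof of the preceding proposition, replacing the use of Theorem \ref{th1.3} (for direct images) with Theorem \ref{th2.3} (for inverse images), and then relaying through the bar-construction via Remark \ref{re1.3}. The key observation is that the intuitionistic fuzzy radical of an intuitionistic fuzzy submodule $A$ of an $R$-module depends only on $\bar A=(A:M)$; this is precisely what Remark \ref{re1.3} asserts.

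First I would record the monotonicity of the radical: if $X\subseteq Y$ in the intuitionistic fuzzy sense then $\sqrt{X}\subseteq\sqrt{Y}$, which is Proposition 1.5 item (2) and can be read off directly from the defining formula \eqref{eq4}. Next I would invoke Theorem \ref{th2.3} applied to $B\in IFM(M')$ to obtain
\[
\bar{B}\subseteq\overline{f^{-1}(B)}.
\]
Before taking radicals I should note that $f^{-1}(B)$ is itself in $IFM(M)$ (this is the routine inverse-image computation established in the excerpt's discussion of $f^{-1}$), so that Remark \ref{re1.3} applies both to $B$ and to $f^{-1}(B)$ and gives
\[
\sqrt{\bar{B}}=\sqrt{B},\qquad \sqrt{\overline{f^{-1}(B)}}=\sqrt{f^{-1}(B)}.
\]
Combining these with monotonicity yields
\[
\sqrt{B}=\sqrt{\bar{B}}\subseteq\sqrt{\overline{f^{-1}(B)}}=\sqrt{f^{-1}(B)},
\]
which is the desired containment.

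For the equality clause, if $f$ is an epimorphism then the equality statement in Theorem \ref{th2.3} upgrades the containment $\bar{B}\subseteq\overline{f^{-1}(B)}$ to $\bar{B}=\overline{f^{-1}(B)}$. Applying the (trivial) fact that the radical operation respects equality and then using Remark \ref{re1.3} on both sides gives $\sqrt{B}=\sqrt{f^{-1}(B)}$, finishing the proof.

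I do not expect any real obstacle: the proposition is essentially an immediate corollary of Theorem \ref{th2.3} together with Remark \ref{re1.3}, and runs entirely parallel to the preceding proposition on direct images. The only point that deserves a sentence of justification is that $f^{-1}(B)\in IFM(M)$, so that Remark \ref{re1.3} can legitimately be applied to it; this is standard and follows from the defining formula $f^{-1}(B)(x)=B(f(x))$ together with the submodule axioms satisfied by $B$.
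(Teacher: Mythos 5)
Your proposal is correct and takes essentially the same route as the paper: the paper's proof likewise invokes Theorem \ref{th2.3} to get $\bar{B}\subseteq\overline{f^{-1}(B)}$, takes radicals, and uses the identification $\sqrt{\bar{A}}=\sqrt{A}$ of Remark \ref{re1.3} on both sides, with the same upgrade to equality when $f$ is an epimorphism. Your extra remarks (explicit monotonicity of the radical and the verification that $f^{-1}(B)\in IFM(M)$) simply spell out steps the paper leaves implicit.
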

\begin{proof}
By Theorem \ref{th2.3}, $ \overline{f^{-1}(B)}\supseteq \bar{B} $, hence $ \sqrt{\overline{f^{-1}(A')}}\supseteq \sqrt{\bar{B}} $ and therefore $ \sqrt{f^{-1}(B)}\supseteq \sqrt{B} $. If $ f $ is epimorphism,   then $ \overline{f^{-1}(B)}= \bar{B} $ and hence $ \sqrt{f^{-1}(B)}= \sqrt{B} $.
\end{proof}
\begin{theorem}
Let $ f:M\longrightarrow M' $ be an epimorphism of R-module $ M $ to R-module $ M' $, and let $ A $ be an intuitionistic fuzzy primary submodule of $ M $, which is constant on kernel $ f $, then the image $f$, $f(A)$ is an intuitionistic fuzzy primary submodule of $ M' $. 
\end{theorem}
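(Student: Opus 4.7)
My plan is to verify the intuitionistic fuzzy primary alternative for $f(A)$ by transporting the corresponding alternative from $A$ along the epimorphism $f$. First I would record that $f(A)$ is an intuitionistic fuzzy submodule of $M'$ by Theorem \ref{th1.3}, so only the primary condition at each pair $(r,m')\in R\times M'$ needs checking.

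The crucial preliminary observation is that the hypothesis ``$A$ is constant on $\ker f$'' forces $\mu_A$ and $\nu_A$ to be constant on every fiber of $f$. Indeed, $\theta\in\ker f$ and $A\in IFM(M)$ give $\mu_A(k)=1$, $\nu_A(k)=0$ for every $k\in\ker f$; the subadditivity axiom for $A$ then yields $\mu_A(x_0+k)\geq\mu_A(x_0)\wedge\mu_A(k)=\mu_A(x_0)$, and applying the same inequality to $x_0=(x_0+k)+(-k)$ gives the reverse, so $\mu_A(x_0+k)=\mu_A(x_0)$; a dual computation gives $\nu_A(x_0+k)=\nu_A(x_0)$. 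Since every element of $f^{-1}(m')$ has the form $x_0+k$ for a fixed preimage $x_0$ and some $k\in\ker f$, it follows that
$$
f(\mu_A)(m')=\mu_A(x_0),\qquad \nu_A(f)(m')=\nu_A(x_0),
$$
for any choice of preimage $x_0\in f^{-1}(m')$; the sup and inf in the definition of $f(A)$ are attained everywhere on the fiber.

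Next I would fix $r\in R$ and $m'\in M'$. By surjectivity pick $x_0\in f^{-1}(m')$, so that $r.x_0\in f^{-1}(r.m')$. By the fiber-constancy just observed, $f(\mu_A)(r.m')=\mu_A(r.x_0)$ and $f(\mu_A)(m')=\mu_A(x_0)$, with the analogous identities for $\nu_A(f)$. Applying the primary hypothesis to the pair $(r,x_0)$ in $M$, either $\mu_A(r.x_0)=\mu_A(x_0)$ and $\nu_A(r.x_0)=\nu_A(x_0)$, which translates immediately to $f(\mu_A)(r.m')=f(\mu_A)(m')$ and $\nu_A(f)(r.m')=\nu_A(f)(m')$; or $\mu_A(r.x_0)\leq\sqrt{\mu_A}(r)$ and $\nu_A(r.x_0)\geq\sqrt{\nu_A}(r)$. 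In the second case I invoke the immediately preceding proposition which, together with the constant-on-$\ker f$ hypothesis, gives $\sqrt{A}=\sqrt{f(A)}$ as intuitionistic fuzzy ideals of $R$; in particular $\sqrt{\mu_A}(r)=\sqrt{f(\mu_A)}(r)$ and $\sqrt{\nu_A}(r)=\sqrt{\nu_A(f)}(r)$, and substituting yields $f(\mu_A)(r.m')\leq\sqrt{f(\mu_A)}(r)$ and $\nu_A(f)(r.m')\geq\sqrt{\nu_A(f)}(r)$, which is exactly the primary inequality for $f(A)$.

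The main obstacle I expect is the fiber-constancy lemma: once it is in hand the primary alternative transports mechanically, since the radical identity $\sqrt{A}=\sqrt{f(A)}$ supplied by the preceding proposition handles all the nontrivial content of the second case, while the submodule axioms for $f(A)$ are already covered by Theorem \ref{th1.3}.
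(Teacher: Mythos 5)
Your proposal is correct and follows essentially the same route as the paper: establish $f(A)\in IFM(M')$ via Theorem \ref{th1.3}, use constancy on $\ker f$ to relate values of $A$ across each fiber, apply the primary alternative upstairs, and settle the second case with the radical comparison $\sqrt{A}\subseteq\sqrt{f(A)}$ (the paper asserts $\sqrt{\mu_A}(r)\leq\sqrt{f(\mu_A)}(r)$ and $\sqrt{\nu_A}(r)\geq\sqrt{\nu_A(f)}(r)$ inline, where your appeal to the preceding proposition's equality is a valid strengthening under the same hypothesis). Your explicit fiber-constancy lemma, which lets you evaluate $f(\mu_A)$ and $\nu_A(f)$ at a single representative $x_0$, is in fact a slight tightening of the paper's argument, which keeps the supremum and infimum over the whole fiber and applies the primary alternative under them, tacitly assuming the same branch of the alternative holds for every $m_1\in f^{-1}(m')$ --- an issue your single-representative reduction avoids.
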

\begin{proof}
By Theorem \ref{th1.3}  $ f(A) $ is an intuitionistic fuzzy submodule of $ M'$. Let $ r\in R\,, m'\in M'$, then
 
\begin{center}
$ f(\mu_A)(rm')=\bigvee\limits_{m\in f^{-1}(rm')}\mu_A(m)\ , \  f(\nu_A)(rm')=\bigwedge\limits_{m\in f^{-1}(rm')}\mu_A(m)$
\end{center}
Since $ A $ constant on kernel $ f $ then
\begin{equation*}
\begin{array}{ll}
$if$\;  m\in f^{-1}(rm')&\Longrightarrow f(m)=rm'=rf(m_1) \quad ($There is $  m_1\in M $ such that $ f(m_1)=m' )\\
& \Longrightarrow f(m)-f(rm_1)=0\\ 
 &\Longrightarrow m-rm_1 \in ker f\\
 & \Longrightarrow \mu_A(m)=\mu_A(rm_1)\ , \ \nu_A(m)=\nu_A(rm_1).
\end{array}
\end{equation*}
 Therefore $ f(\mu_A)(rm')=\bigvee\limits_{m_1\in f^{-1}(m')}\mu_A(rm_1)\ , \  f(\nu_A)(rm')=\bigwedge\limits_{m_1\in f^{-1}(m')}\mu_A(rm_1)$, but $ A $ is an intuitionistic fuzzy primary submodule of $ M $ and hence
 
 $ \mu_A(rm_1)=\mu_A(m_1)\ , \ \nu_A(rm_1)=\nu_A(m_1) $\quad or \quad $ \mu_A(rm_1)\leq \sqrt{\mu_A}(r)\ ,\ \nu_A(rm_1)\geq \sqrt{\nu_A}(r)$.\\ 
 Now if  $ \mu_A(rm_1)=\mu_A(m_1)\ , \ \nu_A(rm_1)=\nu_A(m_1) $ then
 
\begin{small} 
$ f(\mu_A)(rm')=\bigvee\limits_{m_1\in f^{-1}(m')}\mu_A(m_1)=f(\mu_A)(m')\quad ; \quad
 f(\nu_A)(rm')=\bigwedge\limits_{m_1\in f^{-1}(m')}\mu_A(m_1)= f(\nu_A)(m')$.
 
And if $ \mu_A(rm_1)\leq \sqrt{\mu_A}(r)\ ,\ \nu_A(rm_1)\geq \sqrt{\nu_A}(r)$ then

\begin{footnotesize}
$\bigvee\limits_{m_1\in f^{-1}(m')}\mu_A(rm_1)\leq \bigvee\limits_{m_1\in f^{-1}(m')}\sqrt{\mu_A}(r)\Longrightarrow f(\mu_A)(rm')\leq \bigvee\limits_{m_1\in f^{-1}(m')}\sqrt{\mu_A}(r)=\sqrt{\mu_A}(r)\leq \sqrt{f(\mu_A)}(r)$\\

and $ \bigwedge\limits_{m_1\in f^{-1}(m')}\nu_A(rm_1)\geq \bigwedge\limits_{m_1\in f^{-1}(m')}\sqrt{\nu_A}(r)\Longrightarrow f(\nu_A)(rm')\geq \bigwedge\limits_{m_1\in f^{-1}(m')}\sqrt{\nu_A}(r)=\sqrt{\nu_A}(r)\geq \sqrt{f(\nu_A)}(r)$
\end{footnotesize}
\end{small}
 
Therefore we got that
$ f(\mu_A)(rm')=f(\mu_A)(m')\ , \ f(\nu_A)(rm')=f(\nu_A)(m')$\ or \ $ f(\mu_A)(rm')\leq \sqrt{f(\mu_A)}(r)\ ,\ f(\nu_A)(rm')\geq~ \sqrt{f(\nu_A)}(r)$.
This show that $ f(A) $ is an intuitionistic fuzzy primary submodule of $ M' $.
\end{proof}
\begin{theorem}
Let $ f:M\longrightarrow M' $ be a homomorphism of R-module $ M $ to R-module $ M'$. If $ B $ is an intuitionistic fuzzy primary submodule of $ M' $, then $ f^{-1}(B) $ is an intuitionistic fuzzy primary submodule of $ M $.
\end{theorem}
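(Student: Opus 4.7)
The plan is to reduce everything to the primary property of $B$ evaluated at the scalar $r$ and the element $f(m)\in M'$, and then translate back through the identity $\mu_{f^{-1}(B)}(x)=\mu_B(f(x))$, $\nu_{f^{-1}(B)}(x)=\nu_B(f(x))$. First I would verify that $f^{-1}(B)\in IFM(M)$; this is routine because additivity and scalar closure at $f^{-1}(B)$ follow directly from the corresponding properties of $B$ together with $f$ being an $R$-homomorphism (so that $f(x+y)=f(x)+f(y)$ and $f(rx)=rf(x)$).

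For the primary property, fix $r\in R$ and $m\in M$. Using $f(rm)=rf(m)$, the basic translation is $\mu_{f^{-1}(B)}(rm)=\mu_B(rf(m))$ and $\nu_{f^{-1}(B)}(rm)=\nu_B(rf(m))$. Since $B$ is an intuitionistic fuzzy primary submodule of $M'$, its definition applied at $r$ and $f(m)\in M'$ yields either
\medskip

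\noindent (i) $\mu_B(rf(m))=\mu_B(f(m))$ and $\nu_B(rf(m))=\nu_B(f(m))$, or

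\noindent (ii) $\mu_B(rf(m))\le\sqrt{\mu_B}(r)$ and $\nu_B(rf(m))\ge\sqrt{\nu_B}(r)$.
\medskip

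\noindent In case (i) the right-hand sides equal $\mu_{f^{-1}(B)}(m)$ and $\nu_{f^{-1}(B)}(m)$, so the first alternative in the primary definition for $f^{-1}(B)$ holds immediately.

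The heart of the argument is case (ii), where I need the radical inequalities to transfer from $B$ to $f^{-1}(B)$. Expanding the definitions,
\[
\sqrt{\mu_{f^{-1}(B)}}(r)=\bigvee_{n\in\mathbb{N}}\bigwedge_{m\in M}\mu_B(r^{n}f(m)),\qquad \sqrt{\mu_B}(r)=\bigvee_{n\in\mathbb{N}}\bigwedge_{m'\in M'}\mu_B(r^{n}m'),
\]
and since $\{r^{n}f(m):m\in M\}\subseteq \{r^{n}m':m'\in M'\}$, taking the infimum over the larger index set can only decrease the value. Hence $\sqrt{\mu_B}(r)\le\sqrt{\mu_{f^{-1}(B)}}(r)$; symmetrically $\sqrt{\nu_B}(r)\ge\sqrt{\nu_{f^{-1}(B)}}(r)$. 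Chaining with (ii) yields $\mu_{f^{-1}(B)}(rm)\le\sqrt{\mu_{f^{-1}(B)}}(r)$ and $\nu_{f^{-1}(B)}(rm)\ge\sqrt{\nu_{f^{-1}(B)}}(r)$, the required second alternative.

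The only subtle point, and the main obstacle to doing the proof cleanly, is making sure the direction of the radical inequality is right: one wants ``$\le\sqrt{\mu_B}(r)$'' to upgrade to ``$\le\sqrt{\mu_{f^{-1}(B)}}(r)$'', which forces $\sqrt{\mu_B}(r)\le\sqrt{\mu_{f^{-1}(B)}}(r)$; this is exactly what containment of index sets delivers. Note that no surjectivity hypothesis on $f$ is needed, since only $f(M)\subseteq M'$ is invoked.
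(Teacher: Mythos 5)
Your proposal is correct and follows essentially the same route as the paper: translate via $f(rm)=rf(m)$, invoke the primary property of $B$ at $r$ and $f(m)$, and in the second case chain through the radical inequality $\sqrt{\mu_B}(r)\le\sqrt{\mu_{f^{-1}(B)}}(r)$, $\sqrt{\nu_B}(r)\ge\sqrt{\nu_{f^{-1}(B)}}(r)$. The only difference is cosmetic: you verify that inequality directly from the containment $f(M)\subseteq M'$, whereas the paper asserts it, having already established $\sqrt{f^{-1}(B)}\supseteq\sqrt{B}$ in an earlier proposition.
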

\begin{proof}
Clearly $ f^{-1}(B) $ is an intuitionistic fuzzy submodule of $ M $. Let $ r\in R\ ,\ m\in M $, then

$ f^{-1}(\mu_{B})(rm)=\mu_{B}(f(rm))=\mu_{B}(rf(m))\ , \ f^{-1}(\nu_{B})(rm)=\nu_{B}(f(rm))=\nu_{B}(rf(m)) $.

But $B$ is an intuitionistic fuzzy primary submodule, hence

$ \mu_{B}(rf(m))=\mu_{B}(f(m))=f^{-1}(\mu_{B})(m) \ , \ \nu_{B}(rf(m))=\nu_{B}(f(m))=f^{-1}(\nu_{B})(m) $\quad or

$ \mu_{B}(rf(m))\leq\sqrt{\mu_{B}}(r)\leq\sqrt{f^{-1}(\mu_{B})}(r)\ , \ \nu_{B}(rf(m))\geq\sqrt{\nu_{B}}(r)\geq\sqrt{f^{-1}(\nu_{B})}(r) $.

Thus we show that

$ f^{-1}(\mu_{B})(rm)=f^{-1}(\mu_{B})(m)\ , \ f^{-1}(\nu_{B})(rm)=f^{-1}(\nu_{B})(m) $\quad or 

$ f^{-1}(\mu_{B})(rm)\leq\sqrt{f^{-1}(\mu_{B})}(r)\ , \ f^{-1}(\nu_{B})(rm)\geq\sqrt{f^{-1}(\nu_{B})}(r) $.

This prove that$ f^{-1}(B) $ is an intuitionistic fuzzy primary submodule of $ M $.\\
\end{proof}
\begin{conclusion}
In this paper, we defined the radical of an intuitionistic fuzzy submodule, and then we studied the intuitionistic fuzzy primary submodules with the help of this concept. We also presented some properties of homomorphic image and pre-image of these submodules.
\end{conclusion}

\vspace{0.1in}
\hrule width \hsize \kern 1mm

\begin{thebibliography}{XX}
   \bibitem{1} Abulebda, Lamis JM. "Uniformly Primal Submodule over Noncommutative Ring." {\it Journal of Mathematics 2020}(2020).
   \bibitem{2} A.~Rosenfeld, Fuzzy groups, {\it Journal of mathematical analysis and applications}, Volume {35} (1971), 512--517.
    \bibitem{3} B.~Davvaz, W.~A. Dudek and Y.~B. Jun, Intuitionistic fuzzy hv-submodules, {\it Information Sciences}, Volume {176} (2006), 285--300.
    \bibitem{4}C.~V. Negoi{\c{t}}{\u{a}}, D.~A. Ralescu, {\it Applications of fuzzy sets to systems analysis}, Springer (1975).
    \bibitem{5} D.-S. Lee, C.-H. Park, On intuitionistic fuzzy w-primary submodules, {\it Honam Mathematical Journal}, Volume {29}(2007), 631--640.
    \bibitem{6} Hur, Kul, Su-Youn Jang, and Hee-Won Kang. "Intuitionistic fuzzy ideals of a ring." {\it The Pure and Applied Mathematics}, Volume {12.3} (2005): 193-209.
\bibitem{7} J. Goswami, and Helen K. Saikia. "On the Spectrum of Weakly Prime Submodule." {\it Thai Journal of Mathematics}, Volume {19.1} (2019): 51-58. 
    \bibitem{8} K.~Atanassov, {\it Intuitionistic fuzzy sets, fuzzy sets and systems,
  197}, (1986).
  \bibitem{9}K.~Hur, S.-Y. Jang and H.-W.
  Kang, Intuitionistic fuzzy subgroupoids, {\it International journal of fuzzy logic and
  intelligent systems}, Volume {3}(2003), 72--77.
   \bibitem{10} L.~A. Zadeh, Fuzzy sets, {\it Information and control}, Volume {8} (1965), 338--353.
   \bibitem{11} M. Behboodi, and H. Koohy. "Weakly prime modules." {\it Vietnam J. Math}, Volume {32.2} (2004): 185-195.
   \bibitem{12} M. Mashinchi, and M. M. Zahedi. "On L-fuzzy primary submodules." {\it Fuzzy sets and systems}, Volume {49.2} (1992): 231-236. 
   \bibitem{13} Nghiem, Nguyen Dang Hoa, Samruam Baupradist, and Ronnason Chinram. "On nearly prime submodules of unitary modules." {\it Journal of Mathematics 2018}, (2018).
 \bibitem{14} P.~K. Sharma, G.~Kaur, Intuitionistic fuzzy prime spectrum of a ring, {\it International Journal of Fuzzy Systems}, Volume {9} (2017),167--175. 
  \bibitem{15} P.~Sharma, Translates of intuitionistic fuzzy subring, {\it International Review of Fuzzy Mathematics}, Volume {6} (2011), 77--84. 
  \bibitem{16} R.~Biswas, Intuitionistic fuzzy subgroups, {\it Mathematical Fo},(1989).
 \bibitem{17}  Sharma, PK and Kaur, Gagandeep, Residual quotient and annihilator of intuitionistic fuzzy sets of ring and module, {\it International Journal of Computer Science and Information Technology}, Volume {9}(2017), 1--15.
  \bibitem{18} S.~Rahman, H.~K. Saikia, Some aspects of atanassov's intuitionistic fuzzy
  submodule, {\it International Journal of Pure and Applied
  Mathematics}, Volume {77} (2012), 369--383.
  
 
\end{thebibliography}
\end{document}